\renewcommand\@makefntext[1]{\noindent #1}
\numberwithin{equation}{section}
\definecolor{db}{RGB}{23,20,219}
\definecolor{dg}{RGB}{2,101,15}
\colorlet{sectitlecolor}{red!60!black}
\colorlet{sectboxcolor}{cyan!30}
\colorlet{secnumcolor}{orange}
\sffamily\color{sectitlecolor}\Large\bfseries\filcenter}{}{2em}{\thesection.\quad #1}%
\newtheoremstyle{mytheorem}{5pt}{}{\color{db}}{}{\color{db}\bfseries}{}{ }{}
\sffamily\color{sectitlecolor}\bfseries\filcenter}{}{2em}{\thesubsection.\quad #1}%
\newtheoremstyle{mytheorem}{5pt}{}{\color{db}}{}{\color{db}\bfseries}{}{ }{}
\theoremstyle{mytheorem}
\newtheorem{theorem}{Theorem}[section]
\newtheorem{corollary}[theorem]{Corollary}
\newtheorem{proposition}[theorem]{Proposition}
\theoremstyle{definition}
\newtheorem{definition}[theorem]{Definition}
\theoremstyle{example}
\newtheorem{example}{Example}
\theoremstyle{remark}
\numberwithin{equation}{section}
\newcommand{\bm}[1]{\boldsymbol{#1}}
\def\dual{{\sharp}}
\newcommand{\ZU}[2]{%
Z_U\left(\begin{matrix}{#1}\\%
{#2}\end{matrix}\right)}
\newcommand{\ZL}[2]{%
Z_L\left(\begin{matrix}{#1}\\%
{#2}\end{matrix}\right)}
\newcommand{\ZB}[2]{%
Z_B\left(\begin{matrix}{#1}\\%
{#2}\end{matrix}\right)}
\newcommand{\frP}{\mathfrak{P}}
\newcommand{\Q}{\mathbb{Q}}
\newcommand{\R}{\mathbb{R}}
 \DeclareSymbolFont{Shuffle}{U}{shuf}{m}{n}
\DeclareFontFamily{U}{shuf}{}
\DeclareFontShape{U}{shuf}{m}{n}{%
  <-8>shuffle7%
  <8->shuffle10%
}{}
\DeclareMathSymbol\sh{\mathbin}{Shuffle}{"001}
\DeclareMathSymbol\bsh{\mathbin}{Shuffle}{"002}
\def\rightharpoonupfill@{\arrowfill@\relbar\relbar\rightharpoonup}
\newcommand{\oright}[3]{%
\mathpalette{\overarrow@\rightharpoonupfill@}{\bm{#1}_{#2,#3}}}
\def\leftharpoonupfill@{\arrowfill@\leftharpoonup\relbar\relbar}
\newcommand{\oleft}[3]{%
\mathpalette{\overarrow@\leftharpoonupfill@}{\bm{#1}_{#2,#3}}}
\begin{document}
\footnotetext{%
Date: 2023-04-14; Version 11. 
}

\title{Some Symmetry and Duality Theorems on Multiple 
Zeta(-star) Values}

\author{Kwang-Wu Chen}
\address{Department of Mathematics, University of Taipei, 100234 Taipei, Taiwan}
\email{kwchen@utaipei.edu.tw}
\thanks{The first author (corresponding author)
was funded by the Ministry of Science and Technology,
Taiwan, R.O.C., under Grant MOST 111-2115-M-845-001.}

\author{Minking Eie}
\address{Department of Mathematics, National Chung Cheng University, 168 University Road, Min-Hsiung, Chia-Yi 62145, Taiwan}
\email{minkingeie@gmail.com}

\author{Yao Lin Ong}
\address{Executive Master of Business Administration\\
Chang Jung Christian University\\
No.1, Changda Rd., Gueiren District, Tainan City 71101, Taiwan (Yao Lin Ong)
}
\email{ylong@mail.cjcu.edu.tw}


\begin{abstract}\normalsize
In this paper, we provide a symmetric formula and 
a duality formula relating multiple zeta values and zeta-star values.
Leveraging Zagier's formula for computing $\zeta^\star(\{2\}^p,3,\{2\}^q)$, 
we employ our theorems to establish a formula for computing 
$\zeta^\star(\{2\}^p,1,\{2\}^q)$ for 
any positive integers $p$ and $q$, along with other formulas of interest.
\end{abstract}

\keywords{Multiple zeta value, multiple zeta-star value, duality theorem,
Yamamoto's integral}

\subjclass[2020]{Primary: 11M32; Secondary: 05A15, 33B15.}

\maketitle

\section{Introduction}\label{sec1}
For an $r$-tuple $\boldsymbol{\alpha} = (\alpha_{1}, \alpha_{2}, \ldots, \alpha_{r})$ 
of positive integers with $\alpha_{r} \geq 2$, a multiple zeta value $\zeta(\bm\alpha)$
and a multiple zeta-star value $\zeta^\star(\bm\alpha)$
are defined to be \cite{E09, E13, Hoffman1992, Ohno2005, Zagier1994}

\begin{align*}
  \zeta(\boldsymbol{\alpha})
  &= \sum_{1 \leq k_{1} < k_{2} < \cdots < k_{r}} k_{1}^{-\alpha_{1}}
    k_{2}^{-\alpha_{2}} \cdots k_{r}^{-\alpha_{r}},
\quad\mbox{and}\\
  \zeta^{\star}(\boldsymbol{\alpha})
  &= \sum_{1 \leq k_{1} \leq k_{2} \leq \cdots \leq k_{r}} k_{1}^{-\alpha_{1}}
    k_{2}^{-\alpha_{2}} \cdots k_{r}^{-\alpha_{r}}.
\end{align*}

\noindent We denote the parameters $w(\bm\alpha)=|\bm\alpha| 
= \alpha_{1} + \alpha_{2} + \cdots + \alpha_{r}$,
$d(\bm\alpha)=r$, and $h(\bm\alpha)=\#\{i\mid \alpha_i>1, 1\leq i\leq r\}$,
called respectively the weight, the depth, and the height of $\bm\alpha$
(or of $\zeta(\bm\alpha)$, or of $\zeta^\star(\bm\alpha)$).

Yamamoto \cite{Yama2017} introduced a combinatorial generalization of 
the iterated integral, the integral associated with a $2$-poset.
Kaneko and Yamamoto \cite{KY2018} conjecture that the following 
integral-series identity is sufficient to describe all 
linear relations of multiple zeta values over $\mathbb Q$. 

\[
I\left(\begin{xy}
{(-1,-3) \ar@{{*}.o} |-{\bm\alpha} (7,5)},
{(-1,-2) \ar@/^2mm/ @{-} (6,5)},
{(7,5) \ar@{-}@[red] (9,2)},
{(9,2) \ar@{-} (13,6)},
{(9,2) \ar@{{*}o} |-{\bm\beta} (21,6)},
{(9,2) \ar@{.} (17,2)},
{(13,6) \ar@{.} (21,6)},
{(17,2) \ar@{-} (21,6)},
\end{xy}\ \right)
=\ZU{\bm\alpha}{\bm\beta}, 
\]

\noindent where the left-hand side of the above identity 
is an integral associated with a $2$-posets 
introduced by Yamamoto \cite{Yama2017}
(we will introduce the associated notations in Section 2).

The use of Yamamoto's integral is prevalent among 
researchers studying multiple zeta values and multiple zeta-star values
\cite{HMO2021, KY2018, Yama2020}. 

Many scholars have conducted research on the function $Z_U$ 
appearing on the right-hand side of the above identity
\cite{Chen2017,ChenE2022,KY2018, NPY2018,NT2022}.

In this paper, we investigate 
the functions $Z_U$ (Schur multiple zeta values of anti-hook type),
$Z_L$ (Schur multiple zeta values of hook type), and $Z_B$ 
\cite{Chen2017, CCE2016, ChenE2022}:
\begin{align*}
\ZU{\bm\alpha}{\bm\beta}
&:=\!\!\!\!\!\!
\sum_{1\leq k_1<k_2<\cdots<k_r}k_1^{-\alpha_1}k_2^{-\alpha_2}\cdots k_r^{-\alpha_r}
\sum_{1\leq\ell_1\leq\ell_2\leq\cdots\leq\ell_m\leq k_r}
\ell_1^{-\beta_1}\ell_2^{-\beta_2}\cdots\ell_m^{-\beta_m},\\
\ZL{\bm\alpha}{\bm\beta}
&:=\!\!\!\!\!\!
\sum_{1\leq k_1<k_2<\cdots<k_r}k_1^{-\alpha_1}k_2^{-\alpha_2}\cdots k_r^{-\alpha_r}
\sum_{k_1\leq\ell_1\leq\ell_2\leq\cdots\leq\ell_m}
\ell_1^{-\beta_1}\ell_2^{-\beta_2}\cdots\ell_m^{-\beta_m},\\
\ZB{\bm\alpha}{\bm\beta}
&:=\!\!\!\!\!\!
\sum_{1\leq k_1<k_2<\cdots<k_r}k_1^{-\alpha_1}k_2^{-\alpha_2}\cdots k_r^{-\alpha_r}
\sum_{k_1\leq\ell_1\leq\ell_2\leq\cdots\leq\ell_m\leq k_r}
\ell_1^{-\beta_1}\ell_2^{-\beta_2}\cdots\ell_m^{-\beta_m}.
\end{align*}
For the sake of convergence, the functions $Z_U$ and $Z_B$ are restricted by 
$\alpha_r\geq 2$, and the function $Z_L$ is restricted by $\alpha_r\geq 2, \beta_m\geq 2$.

We use Yamamoto's integral to obtain the following 
symmetric form between multiple zeta(-star) values.
We simplify our notations with 
$$
\oright{\alpha}{i}{j}=(\alpha_{i+1},\alpha_2,\ldots,\alpha_j), \quad
\oleft{\alpha}{i}{j}=(\alpha_j,\alpha_{j-1},\ldots,\alpha_{i+1}),
$$
for $i<j$. Note that if $i\geq j$, we set 
$\oright{\alpha}{i}{j}=\oleft{\alpha}{i}{j}=\emptyset$.
For brevity, we use the lowercase English letters and lowercase
Greek letters, with or without subscripts, 
in summations to represent non-negative 
and positive integers, unless otherwise specified.

\begin{theorem}\label{thm.x111}
Let $r,m,\theta$ be nonnegative integers with $\theta\geq 2$, 
and the vectors $\bm\alpha=(\alpha_1,\alpha_2,\ldots,\alpha_r)$,
$\bm\beta=(\beta_1,\beta_2,\ldots,\beta_m)$ with $\alpha_i\geq 2$, $\beta_m\geq 2$,
for $1\leq i\leq r$. Then
\begin{align}    \label{eq.x4.1}
\sum_{a+b=r\atop c+d=m}
(-1)^{a+c}\zeta^\star(\oright{\alpha}{0}{b})
\zeta(\oleft{\beta}{0}{c},\theta,\oleft{\alpha}{b}{r})
\zeta^\star(\oright{\beta}{c}{m})
&=\zeta^\star(\bm\alpha,\theta,\bm\beta),\\
\sum_{a+b=r\atop c+d=m}    \label{eq.x4.2}
(-1)^{a+c}\zeta(\oright{\alpha}{0}{b})
\zeta^\star(\oleft{\beta}{0}{c},\theta,\oleft{\alpha}{b}{r})
\zeta(\oright{\beta}{c}{m})
&=\zeta(\bm\alpha,\theta,\bm\beta).
\end{align}
\end{theorem}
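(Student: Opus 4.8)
The plan is to realize both sides of each identity as Yamamoto integrals attached to a single $2$-poset, and to read off the symmetric sum by decomposing that poset into independent ``totally ordered'' blocks via the standard shuffle-type argument. Concretely, I would encode $\zeta^\star(\bm\alpha,\theta,\bm\beta)$ (for \eqref{eq.x4.1}) as the integral $I$ over the $2$-poset whose Hasse diagram is a single chain carrying the sequence $(\bm\alpha,\theta,\bm\beta)$, where the nodes corresponding to $\bm\alpha$ and $\bm\beta$ are drawn with the ``$\leq$'' (filled-to-open, i.e.\ zeta-star) convention and the $\theta$-node with the strict ``$<$'' (zeta) convention. Then I would interpret this same integral after \emph{reversing} the roles at the central node: split the chain at $\theta$, so the integral factors through the block reading $\bm\alpha$ upward toward $\theta$ and the block reading $\bm\beta$ upward toward $\theta$, while $\theta$ itself records a strict inequality pinned between the top variable coming from the $\bm\alpha$-side and the top variable coming from the $\bm\beta$-side.

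The combinatorial heart is the following: a zeta-star string of length $n$ equals $\sum_{a+b=n}(-1)^a$ times a product $\zeta(\text{reversed prefix})\cdot\zeta(\text{suffix})$ — this is precisely the Hoffman-type ``stuffle/duality'' expansion that turns a weakly increasing sum into a signed sum of products of two strictly increasing sums, and it is the identity underlying $Z_U$, $Z_L$, $Z_B$ in the references cited (\cite{Chen2017, CCE2016, ChenE2022}). I would apply this expansion to the $\bm\alpha$-block (producing the factor $(-1)^a\zeta^\star(\oright{\alpha}{0}{b})$ detached at the bottom and the reversed tail $\oleft{\alpha}{b}{r}$ feeding into $\theta$) and symmetrically to the $\bm\beta$-block (producing $(-1)^c\zeta^\star(\oright{\beta}{c}{m})$ detached at the top and the reversed head $\oleft{\beta}{0}{c}$ feeding into $\theta$ from the other side). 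Reassembling, the string that remains attached to $\theta$ is exactly $\zeta(\oleft{\beta}{0}{c},\theta,\oleft{\alpha}{b}{r})$, the detached pieces are $\zeta^\star$'s, and the sign is $(-1)^{a+c}$, which is \eqref{eq.x4.1}. Identity \eqref{eq.x4.2} is the same argument with the star/non-star conventions on the two outer blocks and the central block interchanged; equivalently one applies the dual expansion ``a non-star string $=\sum(-1)^a\,\zeta^\star\cdot\zeta^\star$'' in the contrapositive direction, or simply dualizes \eqref{eq.x4.1} under the antipode/duality that swaps $\zeta\leftrightarrow\zeta^\star$ on each maximal block.

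I expect the main obstacle to be bookkeeping rather than conceptual: one must be careful that the hypotheses $\alpha_i\ge 2$ (for \emph{all} $i$, not just $i=r$) and $\beta_m\ge 2$ guarantee convergence of every $\zeta$ and $\zeta^\star$ appearing after the split — in particular that each reversed sub-vector $\oright{\alpha}{0}{b}$, $\oleft{\beta}{0}{c}$, etc., still ends in an entry $\ge 2$ — and that the edge cases $a=0$, $b=0$, $c=0$, $d=0$ (where one of the reversed strings is $\emptyset$ and the corresponding $\zeta^\star(\emptyset)=1$) are handled so that the boundary terms of the sum match the convention $\oright{\alpha}{i}{j}=\oleft{\alpha}{i}{j}=\emptyset$ for $i\ge j$. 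The second subtlety is to verify that the graphical manipulation of the $2$-poset (splitting at $\theta$, reversing the two arms) is a legitimate equality of Yamamoto integrals — this follows from Yamamoto's integral calculus \cite{Yama2017} together with the integral--series dictionary recalled in Section~2, but the precise poset picture for the star convention should be drawn explicitly so that the reversal of indices $\bm\alpha\mapsto\oleft{\alpha}{b}{r}$ is manifest from the order-reversing symmetry of the integration simplex.
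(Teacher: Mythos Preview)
Your high-level strategy---realize $\zeta^\star(\bm\alpha,\theta,\bm\beta)$ as a Yamamoto integral and peel off $\zeta^\star$-factors from both the $\bm\alpha$- and $\bm\beta$-ends---matches the paper's, but the mechanics you describe have gaps.

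First, the ``expansion'' you invoke, \emph{a zeta-star string equals $\sum_{a+b=n}(-1)^a\,\zeta(\text{reversed prefix})\cdot\zeta(\text{suffix})$}, is not a valid identity: a single $\zeta^\star$ does not decompose as a signed sum of products of two $\zeta$'s (and indeed you later say the detached pieces are $\zeta^\star$'s, contradicting the expansion you just named). Second, ``splitting the chain at $\theta$'' does not by itself make the integral factor: the $2$-poset for $\zeta^\star$ is a zigzag, not a chain, and the $\theta$-variable links both arms, so there is no free factorization at that node.

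The paper fills this gap by inserting an explicit intermediate object, the function $Z_U$. It first proves, via an iterated Yamamoto-integral decomposition, the identity \eqref{eq.new03},
\[
\zeta^\star(\bm\alpha,\theta,\bm\beta)=\sum_{k=0}^m(-1)^k\zeta^\star(\oright{\beta}{k}{m})\,\ZU{\oleft{\beta}{0}{k},\theta}{\bm\alpha},
\]
which peels the $\bm\beta$-end into detached $\zeta^\star$-factors while leaving a $Z_U$ remainder attached to $\theta$. It then substitutes the purely algebraic expansion \eqref{eq.2.8} (obtained from the recursion \eqref{eq.zurec}),
\[
\ZU{\oleft{\beta}{0}{k},\theta}{\bm\alpha}=\sum_{a+b=r}(-1)^a\zeta(\oleft{\beta}{0}{k},\theta,\oleft{\alpha}{b}{r})\,\zeta^\star(\oright{\alpha}{0}{b}),
\]
which peels the $\bm\alpha$-end. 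Composing the two lines is exactly \eqref{eq.x4.1}; identity \eqref{eq.x4.2} comes the same way from the companion pair \eqref{eq.new01} and \eqref{eq.2.9}. So the missing ingredient in your sketch is naming $Z_U$: once you do, your two informal ``expansions'' become the concrete identities \eqref{eq.new03} and \eqref{eq.2.8}, and the proof is two substitutions.
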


The following symmetric formula that we have is truly exquisite.
\begin{corollary}
For any nonnegative integers $p,q,s,t,\alpha$ with $s\geq 2,t\geq 2,\alpha\geq 2$, we have
\begin{align*}
\sum_{a+b=q\atop c+d=p}
(-1)^{a+c}\zeta^\star(\{s\}^b)\zeta(\{s\}^a,\alpha,\{t\}^c)
\zeta^\star(\{t\}^d)&=\zeta^\star(\{t\}^p,\alpha,\{s\}^q),\\
\sum_{a+b=q\atop c+d=p}
(-1)^{a+c}\zeta(\{s\}^b)\zeta^\star(\{s\}^a,\alpha,\{t\}^c)
\zeta(\{t\}^d)&=\zeta(\{t\}^p,\alpha,\{s\}^q).
\end{align*}
\end{corollary}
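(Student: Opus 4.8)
The plan is to obtain both identities from Theorem~\ref{thm.x111} by specializing the argument vectors to constant tuples; nothing beyond Theorem~\ref{thm.x111} is needed. The only observation required is that a truncation or a reversal of a constant vector is again the same constant vector: for $\bm\alpha=\{t\}^p$ one has $\oright{\alpha}{0}{b}=\{t\}^b$ and $\oleft{\alpha}{b}{r}=\{t\}^{r-b}$, and similarly for $\bm\beta=\{s\}^q$.

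First I would apply \eqref{eq.x4.1} with $\bm\alpha=\{t\}^p$ (so $r=p$, and the hypothesis $\alpha_i\geq 2$ becomes $t\geq 2$), $\bm\beta=\{s\}^q$ (so $m=q$, and $\beta_m\geq 2$ becomes $s\geq 2$), and $\theta=\alpha$ (so $\theta\geq 2$ becomes $\alpha\geq 2$). Using $a+b=p=r$ and $c+d=q=m$, the three factors on the left-hand side simplify to $\zeta^\star(\{t\}^b)$, $\zeta(\{s\}^c,\alpha,\{t\}^a)$, and $\zeta^\star(\{s\}^d)$, while the right-hand side becomes $\zeta^\star(\{t\}^p,\alpha,\{s\}^q)$, so
\[
\sum_{a+b=p\atop c+d=q}(-1)^{a+c}\,\zeta^\star(\{t\}^b)\,\zeta(\{s\}^c,\alpha,\{t\}^a)\,\zeta^\star(\{s\}^d)=\zeta^\star(\{t\}^p,\alpha,\{s\}^q).
\]
Relabeling the two summation pairs via $(a,b,c,d)\mapsto(c,d,a,b)$, which leaves the sign $(-1)^{a+c}$ unchanged, turns this into the first equation of the Corollary. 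The second equation follows by the identical argument applied to \eqref{eq.x4.2}.

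I do not expect a genuine obstacle: the substitution is mechanical. The only points that need care are that the convergence restrictions attached to $Z_U$, $Z_L$, $Z_B$, and to $\theta$ in Theorem~\ref{thm.x111} translate exactly into $s\geq 2$, $t\geq 2$, $\alpha\geq 2$, and that the degenerate cases $p=0$ or $q=0$ are covered by the convention $\oright{\alpha}{i}{j}=\oleft{\alpha}{i}{j}=\emptyset$ for $i\geq j$ already used in the statement of the theorem. The apparent asymmetry between the roles of $(s,q)$ and $(t,p)$ in the final formulas is only an artifact of which constant vector is named $\bm\alpha$ and which is named $\bm\beta$ in the substitution.
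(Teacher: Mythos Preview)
Your proposal is correct and is exactly the approach the paper takes: the corollary is stated immediately after Theorem~\ref{thm.41} (the restatement of Theorem~\ref{thm.x111}) as a direct specialization with no separate proof given. Your explicit relabeling $(a,b,c,d)\mapsto(c,d,a,b)$ just makes transparent the bookkeeping that the paper leaves to the reader.
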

We note that a version of finite sums of the above two identities can be found in \cite{TY2013, Xu2019}.

Let $(a_1,b_1)$, $(a_2,b_2)$, $\ldots$, $(a_n,b_n)$ be $n$ pairs of 
nonnegative integers. If we write 
$$
(\alpha_1,\alpha_2,\ldots,\alpha_r)
=(\{1\}^{a_1},b_1+2,\{1\}^{a_2},b_2+2,\ldots,\{1\}^{a_n},b_n+2)
$$
and set 
$$
(\{1\}^{b_n},a_n+2,\{1\}^{b_{n-1}},a_{n-1}+2,\ldots,\{1\}^{b_1},a_1+2)
=(\beta_1,\beta_2,\ldots,\beta_m),
$$
then the duality theorem of multiple zeta values \cite{Ohno1999} is stated as 
$$
\zeta(\alpha_1,\alpha_2,\ldots,\alpha_r)
=\zeta(\beta_1,\beta_2,\ldots,\beta_m).
$$
We say that $\zeta(\beta_1,\ldots,\beta_m)$ is the dual of 
$\zeta(\alpha_1,\ldots,\alpha_r)$, and we denote it as $
\zeta(\alpha_1,\ldots,\alpha_r)^\dual=\zeta(\beta_1,\ldots,\beta_m).
$

In 2015, the first author \cite{Chen2017} gave a formula related to $Z_U$:
$$
\ZU{\beta_1,\beta_2,\ldots,\beta_m}{\{1\}^q}
=\sum_{|\bm d|=q}\zeta(\alpha_1+d_1,\ldots,\alpha_r+d_r)\prod^r_{j=1}\binom{\alpha_j+d_j-1}{d_j},
$$
if $\zeta(\beta_1,\ldots,\beta_m)$ is the dual of $\zeta(\alpha_1,\ldots,\alpha_r)$.

In this paper, we present three different methods to prove 
the following duality theorem between these $Z_B$, $Z_U$, and $Z_L$ functions.
\begin{theorem}
If $\zeta(\beta_1,\beta_2,\ldots,\beta_m)$ is the dual of $\zeta(\alpha_1,\alpha_2,\ldots,\alpha_r)$.
Given any nonnegative integer $n$, we have
\begin{align*}
\ZU{\alpha_1,\alpha_2,\ldots,\alpha_r}{\{2\}^n}
&=\ZL{\beta_1,\beta_2,\ldots,\beta_m}{\{2\}^n},
 \quad\mbox{and}\\
\ZB{\alpha_1,\alpha_2,\ldots,\alpha_r}{\{2\}^n}
&=\ZB{\beta_1,\beta_2,\ldots,\beta_m}{\{2\}^n}.
\end{align*}
\end{theorem}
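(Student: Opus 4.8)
The plan is to prove both duality identities at once by recognizing that each of the three series $Z_U$, $Z_L$, $Z_B$ is a Yamamoto integral associated to an explicit $2$-poset, and that the duality $\zeta(\bm\alpha)=\zeta(\bm\beta)$ arises from a symmetry of the Yamamoto integral. Concretely, I would first translate the combinatorial data $(\{1\}^{a_1},b_1+2,\dots,\{1\}^{a_n},b_n+2)$ into the standard "$\bullet/\circ$" string along a line segment: each index $1$ contributes a $\circ$ (a $\frac{dt}{1-t}$ factor) and each index $\geq 2$ contributes a $\bullet$ (a $\frac{dt}{t}$ factor) preceded by the appropriate number of $\bullet$'s, so that $\zeta(\bm\alpha)$ is the integral over the standard simplex of the word $w(\bm\alpha)$, and the dual word $w(\bm\beta)$ is obtained by reversing the string and swapping $\bullet\leftrightarrow\circ$ — this is the usual integral proof of the Ohno--Zagier duality \cite{Ohno1999}. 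The key point is that attaching the extra argument $\{2\}^n$ in the lower row of $Z_U$ corresponds, in Yamamoto's calculus, to grafting an anti-hook (a descending chain of $n$ nodes labelled alternately to produce the $\{2\}^n$ pattern) onto the \emph{terminal} vertex of the $\bm\alpha$-string; for $Z_L$ the same hook is grafted onto the \emph{initial} vertex; and for $Z_B$ it is a "bridge" attached at both ends.

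Next I would record the precise $2$-poset pictures. For $\ZU{\bm\alpha}{\{2\}^n}$ the poset is the totally ordered chain carrying the word $w(\bm\alpha)$ with, hanging below its top element, an additional totally ordered chain of length $n$ whose labels realize $\{2\}^n$ read from the top element downward (this is exactly the anti-hook shape, matching the integral on the left of the Kaneko--Yamamoto identity quoted in the introduction). For $\ZL{\bm\beta}{\{2\}^n}$ it is the chain carrying $w(\bm\beta)$ with the length-$n$ chain hanging below its \emph{bottom} element. The heart of the argument is then a single observation: the involution on $2$-posets that (i) reverses the order of the main chain and dualizes its labels $\bullet\leftrightarrow\circ$ — which by the classical argument turns $w(\bm\alpha)$ into $w(\bm\beta)$ and swaps "top" and "bottom" of the main chain — and (ii) correspondingly reverses and dualizes the labels along the attached $\{2\}^n$-chain — which, since $\{2\}^n$ is a \emph{self-dual} index (its word $(\bullet\circ)^n$ reverses-and-dualizes to itself), sends that chain back to a $\{2\}^n$-chain — carries the $Z_U$-poset to the $Z_L$-poset and the $Z_B$-poset to itself. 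Since Yamamoto's integral is invariant under reversal-plus-label-dualization of the whole $2$-poset (this is the general duality in \cite{Yama2017}, of which Ohno--Zagier duality is the one-chain case), the two integrals are equal, giving the theorem.

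I would then spell out the self-duality of $\{2\}^n$ carefully, because it is what makes the attached chain land back on itself rather than on some other index: writing the admissible word for $(2,2,\dots,2)$ ($n$ twos) as $\bullet\circ\bullet\circ\cdots\bullet\circ$ of length $2n$, reversing gives $\circ\bullet\cdots\circ\bullet$ and swapping symbols gives $\bullet\circ\cdots\bullet\circ$ again; I should also check the boundary bookkeeping at the vertex where the chain is grafted (the grafting vertex is shared, so one must verify that the "seam" labels agree after the involution — this reduces to the fact that in $Z_U$ the constraint is $\ell_m\le k_r$ at the top while in $Z_L$ it is $k_1\le\ell_1$ at the bottom, and the reversal interchanges exactly these). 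Finally, I would remark that the paper promises three proofs; the integral proof above is the cleanest, and alternative proofs can be given either by manipulating the defining multiple series directly — summing over the shared variable and using the series-level duality for the single-chain factor — or by using Theorem~\ref{thm.x111} together with the known $Z_U$-formula of \cite{Chen2017} to reduce everything to Ohno--Zagier duality for ordinary multiple zeta values.

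\medskip

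\noindent\textbf{Main obstacle.} The genuinely delicate step is not the self-duality of $\{2\}^n$ (that is a one-line check) but verifying that the anti-hook and hook $2$-posets I write down actually \emph{represent} $Z_U$ and $Z_L$ in Yamamoto's formalism with the $\{2\}^n$ tail, and that the global reversal-dualization involution acts on the composite poset exactly as claimed at the grafting seam. Getting the orientation of the hanging chain right (so that the $n$ hanging integration variables are forced to lie \emph{below} $k_r$, rather than interleaved with the main chain) and making sure the involution does not accidentally produce $\ZL{\bm\beta}{\{2\}^n}$ with the tail on the wrong end is where a careless argument would break; this is why drawing the explicit $\xy$-pictures for all three posets, before and after the involution, is an essential part of the write-up.
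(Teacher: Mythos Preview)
Your plan is essentially the paper's Method~2 (Yamamoto's integral): the paper likewise realizes $Z_U$ and $Z_L$ as integrals over $2$-posets, applies the global involution $t\mapsto 1-t$ (vertical reflection together with $\bullet\leftrightarrow\circ$), and observes that because the lower row is $\{2\}^n$ the reflected poset coincides with the one for $\ZL{\bm\beta}{\{2\}^n}$. Two points of correction. First, the tail carrying $\{2\}^n$ in $Z_U$ and $Z_L$ is \emph{not} a single totally ordered chain with word $(\bullet\circ)^n$ as you describe; it is the zig-zag (MZSV-type) poset for $\zeta^\star(\{2\}^n)$, so your self-duality check must be redone for that zig-zag shape (it is still self-dual, since every tooth is a $\bullet\!-\!\circ$ pair and the reflection permutes the teeth, but the picture and the seam bookkeeping differ from what you wrote). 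Second, for $Z_B$ the paper does \emph{not} argue via a single poset picture; it instead uses the expansion
\[
\ZB{\bm\alpha}{\{2\}^n}=\sum_{a+b+c=n}(-1)^{a+b}\zeta(\{2\}^a,\bm\alpha,\{2\}^b)\,\zeta^\star(\{2\}^c)
\]
together with the ordinary duality $\zeta(\{2\}^a,\bm\alpha,\{2\}^b)=\zeta(\{2\}^b,\bm\beta,\{2\}^a)$. Your poset sketch for $Z_B$ would also succeed, but you would have to supply a $2$-poset representation of $Z_B$, which the paper never writes down. The paper's remaining two proofs --- a generating-function argument via the product formula $\prod_{j\ge1}(1-x^2/j^2)^{-1}=\pi x/\sin(\pi x)$, and a direct one using the expansions $\ZU{\bm\alpha}{\{2\}^n}=\sum_{a+b=n}(-1)^a\zeta(\bm\alpha,\{2\}^a)\zeta^\star(\{2\}^b)$ and $\ZL{\bm\beta}{\{2\}^n}=\sum_{a+b=n}(-1)^a\zeta(\{2\}^a,\bm\beta)\zeta^\star(\{2\}^b)$ --- are closer to the series-level alternative you allude to at the end.
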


The authors \cite{ChenE2022} use these three functions to give three new sum formulas
for multiple zeta(-star) values with height $\leq 2$ and the evaluation of 
$\zeta^\star(\{1\}^m,\{2\}^{n+1})$. For example, 
the authors use $Z_B$ to obtain \cite[Theorem 1.1]{ChenE2022}
$$
\sum_{\alpha_1+\alpha_2=n\atop \alpha_1,\alpha_2\geq 1}
\zeta^\star(\alpha_1,\{1\}^m,\alpha_2+1)=(m+n)\zeta(m+n+1).
$$

To make the symbols more compact, we adopt the fact $\zeta(\emptyset)=\zeta^\star(\emptyset)=1$.
Moreover, we use the duality theorem on the functions
$Z_U$ and $Z_L$ to get the following formula.

\begin{theorem}
For any nonnegative integers $p$, $q$, and $r$ with $p>0$, $q>0$, we have
\begin{align*}
&\zeta^\star(\{2\}^p,\{1\}^r,\{2\}^q)+(-1)^{r+1}\zeta^\star(\{2\}^q,r+2,\{2\}^{p-1}) \\
&=\sum_{a+b=r-1}(-1)^a\zeta^\star(a+2,\{2\}^{p-1})
\zeta^\star(\{1\}^{b+1},\{2\}^q).
\end{align*}
\end{theorem}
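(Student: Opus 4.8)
The plan is to deduce the identity from the $Z_U$–$Z_L$ duality theorem applied to one carefully chosen dual pair, so the first task is to fix that pair. Writing $(\{2\}^{p-1},\{1\}^r,2)$ in terms of the blocks $(\{1\}^{a_i},b_i+2)$, the $p-1$ leading $2$'s contribute the trivial pairs $(0,0)$ and the final block $\{1\}^r,2$ contributes the pair $(r,0)$; reversing this list of pairs and transposing each entry returns $(\{1\}^0,r+2),(\{1\}^0,2),\dots,(\{1\}^0,2)$, that is $(r+2,\{2\}^{p-1})$. Hence $(\{2\}^{p-1},\{1\}^r,2)^{\dual}=(r+2,\{2\}^{p-1})$, and taking $n=q$ in the duality theorem gives
\[
\ZU{r+2,\{2\}^{p-1}}{\{2\}^q}=\ZL{\{2\}^{p-1},\{1\}^r,2}{\{2\}^q}.
\]
All admissibility conditions hold because $p>0$ and $q>0$, and this one identity is the only input from the duality theorem.

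\textbf{Unfolding the two sides.} Next I would rewrite each $Z$–value as a combination of multiple zeta–star values and products of such. On the $Z_L$–side the smallest main index $k_1$ decouples the remaining main chain from the $\{2\}^q$–arm (which is only constrained to lie above $k_1$), so for fixed $k_1$ the sum factors; converting the surviving strict chain back into a star value by the standard expansion $\zeta(\cdot)=\sum_{\square}(\pm1)\,\zeta^\star(\cdot)$ over the compositions of its block structure, and then carrying out the alternating sum in the exponent of $k_1$, I expect to recover $\zeta^\star(\{2\}^p,\{1\}^r,\{2\}^q)$ together with the family $(-1)^a\zeta^\star(a+2,\{2\}^{p-1})\,\zeta^\star(\{1\}^{b+1},\{2\}^q)$, $a+b=r-1$ — the product shape arising because a run of $a$ unit letters fuses with a $2$ into an $a+2$ carried below the block $\{2\}^{p-1}$, while the remaining unit letters and the trailing $\{2\}^q$ form the second factor. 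A parallel unfolding of the $Z_U$–value (this time isolating the largest main index, below which the lower main chain and the arm are independent) accounts for the remaining term $(-1)^{r+1}\zeta^\star(\{2\}^q,r+2,\{2\}^{p-1})$; for $p=1$ this step is vacuous, since then the $Z_U$–value is literally $\zeta^\star(\{2\}^q,r+2)$. Equating the two expansions and transferring the star value indexed by $(\{2\}^q,r+2,\{2\}^{p-1})$ to the left gives the assertion, the convention $\zeta^\star(\emptyset)=1$ making the boundary cases ($p=1$ on the left, $b=r-1$ on the right) uniform.

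\textbf{The main obstacle.} The work is concentrated in the unfolding: one has to control how the interleaved strict and weak orderings inside $Z_U$ and $Z_L$ split at the chosen extremal index, and then show that the alternating sum over the exponent of that index telescopes in such a way that the $r$ unit letters of $\{1\}^r$ coalesce into the single letter $r+2$ while the product shapes appear cleanly. Rather than attempting this in one stroke I would induct on $r$: the base $r=0$ is the trivial identity $0=0$ (the sum on the right is empty and $\zeta^\star(\{2\}^{p+q})=\zeta^\star(\{2\}^q,2,\{2\}^{p-1})$), and for the inductive step I would peel one unit letter off the $\{1\}^r$ block, re-apply the displayed duality with $r$ replaced by $r-1$, and reconcile the newly produced term against the increment of the right-hand sum; pinning down exactly how $\zeta^\star(a+2,\{2\}^{p-1})$ and $\zeta^\star(\{1\}^{b+1},\{2\}^q)$ recombine under this peeling is the crux of the argument.
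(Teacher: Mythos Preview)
Your dual pair $(\{2\}^{p-1},\{1\}^r,2)^{\dual}=(r+2,\{2\}^{p-1})$ is correct, and so is the displayed identity
\[
\ZU{r+2,\{2\}^{p-1}}{\{2\}^q}=\ZL{\{2\}^{p-1},\{1\}^r,2}{\{2\}^q}.
\]
The gap is the ``unfolding.'' If you expand either side by any of the four standard product formulas for $Z_U$ and $Z_L$, the $\{2\}^{p-1}$ block and the $\{2\}^q$ block inevitably merge. For example, the $\zeta^\star$--expansion of the left side yields $(-1)^{p-1}\zeta^\star(\{2\}^{p+q-1},r+2)$ together with terms $\zeta(r+2,\{2\}^{b-1})\zeta^\star(\{2\}^{p+q-b})$; you never obtain $\zeta^\star(\{2\}^q,r+2,\{2\}^{p-1})$ with $r+2$ sandwiched between separated blocks, nor products of the shape $\zeta^\star(a+2,\{2\}^{p-1})\zeta^\star(\{1\}^{b+1},\{2\}^q)$. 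The ``factorization at $k_1$'' you describe is a convolution over $k_1$, not a product of two star values, and your fallback induction on $r$ is not actually carried out (``peeling off one unit letter'' has no obvious meaning at the level of your displayed $Z_U=Z_L$ identity).

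The paper keeps the two blocks separate by a different mechanism. Rather than a single duality at fixed $p$, it forms the generating function $G(x)=\sum_{p\ge 0}\zeta^\star(\{2\}^p,\{1\}^r,\{2\}^q)\,x^{2p}$ and recognizes it as the convolution of $\sum_n\zeta^\star(\{2\}^n)x^{2n}$ against a series whose $m$th coefficient is $\ZL{1,\{2\}^{m+1}}{\{1\}^{r-1},\{2\}^q}$. That $Z_L$ is rewritten via the basic recursion as $(-1)^{r-1}\ZL{\{1\}^r,\{2\}^{m+1}}{\{2\}^q}$ plus products $\zeta(\{1\}^{a+1},\{2\}^{m+1})\zeta^\star(\{1\}^{b+1},\{2\}^q)$; duality is then applied \emph{termwise in $m$} to turn the first piece into $\ZU{\{2\}^m,r+2}{\{2\}^q}$ and the $\zeta$--factor into $\zeta(\{2\}^m,a+3)$. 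The step your approach lacks is the subsequent collapse
\[
\sum_{m+n=p-1}(-1)^m\zeta^\star(\{2\}^n)\ZU{\{2\}^m,r+2}{\{2\}^q}=\zeta^\star(\{2\}^q,r+2,\{2\}^{p-1}),
\]
an instance of the paper's Proposition in Section~\ref{sec.4}; this is precisely what places $r+2$ between a $q$--block and a $(p{-}1)$--block, and it has no counterpart if you start from a single duality identity at one fixed $p$.
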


Leveraging Zagier's formula (ref. \cite{Zagier2012})
for computing $\zeta^\star(\{2\}^p,3,\{2\}^q)$, 
we employ our theorems to establish a formula for computing 
$\zeta^\star(\{2\}^p,1,\{2\}^q)$:
$$
\zeta^\star(\{2\}^p,1,\{2\}^q) =
2\sum^{p+q}_{k=1}
\left[\binom{2k}{2q}-\left(1-\frac{1}{2^{2k}}\right)\binom{2k}{2p-1}\right]
\zeta^\star(\{2\}^{p+q-k})\zeta(2k+1),
$$
for any positive integers $p$ and $q$, along with other formulas of interest 
in the final section.

\section{Some Preliminaries and Auxiliary Tools}\label{sec.2}
In this section, we review the definitions and basic properties of 2-labeled posets 
(in this paper, we call them 2-posets for short) 
and the associated integrals first introduced by Yamamoto \cite{Yama2017}. 
\begin{definition}\cite[Definition 3.1]{KY2018}
A \textit{2-poset} is a pair $(X,\delta_X)$, where $X=(X,\leq)$ is 
a finite partially ordered set (poset for short) and 
$\delta_X$ is a map from $X$ to $\{0,1\}$. 
We often omit  $\delta_X$ and simply say ``a 2-poset $X$.'' 
The $\delta_X$ is called the \emph{label map} of $X$. 

A 2-poset $(X,\delta_X)$ is called \textit{admissible} if 
$\delta_X(x)=0$ for all maximal elements $x\in X$ and 
$\delta_X(x)=1$ for all minimal elements $x\in X$. 
\end{definition}

A 2-poset is depicted as a Hasse diagram in which an element $x$ with 
$\delta(x)=0$ (resp. $\delta(x)=1$) is represented by $\circ$ 
(resp. $\bullet$). For example, the diagram 
\[\begin{xy}
{(0,-4) \ar @{{*}-o} (4,0)},
{(4,0) \ar @{-o} (8,4)},
{(8,4) \ar @[red]@{-} (12,0)},
{(12,0) \ar @{{*}-o} (16,4)}
\end{xy} \]
represents the 2-poset $X=\{x_1,x_2,x_3,x_4,x_5\}$ with order 
$x_1<x_2<x_3>x_4<x_5$ and label 
$(\delta_X(x_1),\ldots,\delta_X(x_5))=(1,0,0,1,0)$. 

\begin{definition}\cite[Definition 3.2]{KY2018}
For an admissible 2-poset $X$, we define the associated integral 
\begin{equation}\label{eq:I(X)}
I(X)=\int_{\Delta_X}\prod_{x\in X}\omega_{\delta_X(x)}(t_x), 
\end{equation}
where 
\[\Delta_X=\bigl\{(t_x)_x\in [0,1]^X \bigm| t_x<t_y \text{ if } x<y\bigr\}
\quad\mbox{and }\quad
\omega_0(t)=\frac{dt}{t}, \quad \omega_1(t)=\frac{dt}{1-t}. \]
\end{definition}
Note that the admissibility of a 2-poset corresponds to 
the convergence of the associated integral. 

\begin{example}\label{ex:MZV}
When an admissible 2-poset is totally ordered, 
the corresponding integral is exactly the iterated integral expression 
for a multiple zeta value. 
To be precise, for an index 
$\bm\alpha=(\alpha_1,\ldots,\alpha_r)$ (admissible or not), 
we write the `totally ordered' diagram: 
\[\begin{xy}
{(0,-24) \ar @{{*}-o} (4,-20)}, 
{(4,-20) \ar @{.o} (10,-14)}, 
{(10,-14) \ar @[red] @{{.}-} (14,-10)}, 
{(14,-10) \ar @{{*}.o} (20,-4)}, 
{(20,-4) \ar @[red] @{-} (24,0)}, 
{(24,0) \ar @{{*}-o} (28,4)}, 
{(28,4) \ar @{.o} (34,10)}, 
{(0,-23) \ar @/^2mm/ @{-}^{\alpha_1} (9,-14)}, 
{(24,1) \ar @/^2mm/ @{-}^{\alpha_r} (33,10)} 
\end{xy} \qquad\qquad\mbox{as}\qquad\qquad\qquad
\begin{xy}
{(0,-3) \ar@{{*}.o} |-{\bm\alpha} (8,5)},
{(0,-2) \ar@/^2mm/ @{-} (7,5)},
\end{xy}\]

In \cite{Yama2017}, an integral expression for multiple zeta-star values 
is described in terms of a 2-poset. 
For an index $\bm\beta=(\beta_1,\beta_2,\ldots,\beta_m)$, we write the following diagram:
\[\begin{xy}
{(0,-4) \ar @{{*}-o} (4,0)}, 
{(4,0) \ar @{.o} (8,4)}, 
{(8,4) \ar @[red] @{-} (12,-4)}, 
{(12,-4) \ar @{{*}.} (14,-2)}, 
{(16,0) \ar @{.} (20,0)}, 
{(22,0) \ar @{.{*}} (24,-4)}, 
{(24,-4) \ar @{-{o}} (28,0)}, 
{(28,0) \ar @{.o} (32,4)}, 
{(32,4) \ar @[red] @{-} (36,-4)}, 
{(36,-4) \ar @{{*}-{o}} (40,0)}, 
{(40,0) \ar @{.o} (44,4)}, 
{(0,-3) \ar @/^2mm/ @{-}^{\beta_m} (7,4)}, 
{(24,-3) \ar @/^2mm/ @{-}^{\beta_2} (31,4)}, 
{(36,-3) \ar @/^2mm/ @{-}^{\beta_1} (43,4)}, 
\end{xy}
\qquad\qquad\mbox{as}\qquad\qquad\qquad
\begin{xy}
{(0,-2) \ar@{-} (4,2)},
{(0,-2) \ar@{{*}o} |-{\bm\beta} (12,2)},
{(0,-2) \ar@{.} (8,-2)},
{(4,2) \ar@{.} (12,2)},
{(8,-2) \ar@{-} (12,2)},
\end{xy}
\]

Then, if $\bm\alpha$ and $\bm\beta$ are admissible, we have 
\cite[Proposition 2.4, 2.7]{Yama2020}
\begin{equation}\label{eq:SI MZV}
\zeta(\bm\alpha)=
I\left(\begin{xy}
{(0,-3) \ar@{{*}.o} |-{\bm\alpha} (8,5)},
{(0,-2) \ar@/^2mm/ @{-} (7,5)},
\end{xy}\right),
\qquad\qquad\mbox{and}\qquad\qquad
\zeta^\star(\bm\beta)=
I\left(\ \begin{xy}
{(0,-2) \ar@{-} (4,2)},
{(0,-2) \ar@{{*}o} |-{\bm\beta} (12,2)},
{(0,-2) \ar@{.} (8,-2)},
{(4,2) \ar@{.} (12,2)},
{(8,-2) \ar@{-} (12,2)},
\end{xy}\ \right). 
\end{equation}
\end{example}

For example, 
\[
\zeta(1,2)=
I\left(\begin{xy}
{(0,-3) \ar@{{*}.o} |-{\bm(1,2)} (8,5)},
{(0,-2) \ar@/^3mm/ @{-} (7,5)},
\end{xy}\,\right)
\ =\
I\left(\begin{xy}
{(0,-3) \ar@{{*}-{*}}^1 (3,0)},
{(3,0) \ar@{-o} (6,3)},
{(3,0) \ar@/_2mm/ @{-}_2 (6,3)},
\end{xy}\right),
\qquad
\zeta^\star(2,3)=
I\left(\begin{xy}
{(0,-2) \ar@{-} (4,4)},
{(0,-2) \ar@{{*}o} |-{\bm(2,3)} (13,4)},
{(0,-2) \ar@{.} (9,-2)},
{(4,4) \ar@{.} (13,4)},
{(9,-2) \ar@{-} (13,4)}, 
\end{xy}\,\,\right)
\ =\ 
I\left(\begin{xy}
{(0,-4) \ar @{{*}-} (4,0)}, 
{(4,0) \ar @{o-} (8,4)}, 
{(8,4) \ar @{o-} (12,0)}, 
{(12,0) \ar @{{*}-o} (16,4)}, 
{(0,-4) \ar@/^2mm/@{-}^3 (8,4)},
{(12,0) \ar@/^2mm/@{-}^2 (16,4)},
\end{xy}\right)\ .
\]

We also recall an algebraic setup for 2-posets 
(cf.\ Remark at the end of \S2 of \cite{Yama2017}). 
Let $\frP$ be the $\Q$-algebra generated by 
the isomorphism classes of 2-posets, 
whose multiplication is given by the disjoint union of 2-posets. 
Then the integral \eqref{eq:I(X)} defines a $\Q$-algebra homomorphism 
$I\colon\frP^0\to\R$ from the subalgebra $\frP^0$ of $\frP$ generated by 
the classes of admissible 2-posets. 

Following these notations, we have 
\begin{equation}
\ZU{\bm\alpha}{\bm\beta}=
I\left(\begin{xy}
{(-1,-3) \ar@{{*}.o} |-{\bm\alpha} (7,5)},
{(-1,-2) \ar@/^2mm/ @{-} (6,5)},
{(7,5) \ar@{-}@[red] (9,2)},
{(9,2) \ar@{-} (13,6)},
{(9,2) \ar@{{*}o} |-{\bm\beta} (21,6)},
{(9,2) \ar@{.} (17,2)},
{(13,6) \ar@{.} (21,6)},
{(17,2) \ar@{-} (21,6)},
\end{xy}\ \right),
\qquad\mbox{and}\qquad
\ZL{\bm\alpha}{\bm\beta}=I\left(\ \begin{xy}
{(0,-5) \ar@{-} (4,-1)},
{(0,-5) \ar@{{*}o} |-{\bm\beta} (12,-1)},
{(0,-5) \ar@{.} (8,-5)},
{(4,-1) \ar@{.} (12,-1)},
{(8,-5) \ar@{-} (12,-1)},
{(12,-1) \ar@[red]@{-} (15,-4)},
{(15,-4) \ar@{{*}.o} |-{\bm\alpha} (23,4)},
{(16,-4) \ar@/_2mm/ @{-} (23,3)},
\end{xy}\ \ \right).
\end{equation}

In fact, the upper left equation is the same as \cite[Theorem 4.1]{KY2018}.
Kaneko and Yamamoto conjecture that any linear dependency of MZVs over $\mathbb Q$
is deduced from this equation. 

We evaluate $\ZL{1,2}{1,2}$ as an example:
Let $D=\{(t_1,t_2,t_3,t_4,t_5,t_6)\in[0,1]^6\mid t_1<t_2>t_3>t_4<t_5<t_6\}$, we calculate
\begin{align*}
I\left(  
\begin{xy}
{(0,0) \ar @{{*}-} (4,4)}, 
{(4,4) \ar @{o-} (8,0)}, 
{(8,0) \ar @{{*}-} (12,-4)}, 
{(12,-4) \ar @{{*}-} (16,0)}, 
{(16,0) \ar @{{*}-o} (20,4)}, 
\end{xy}\  
\right) &= \int_D\frac{dt_1}{1-t_1}\frac{dt_2}{t_2}\frac{dt_3}{1-t_3}\frac{dt_4}{1-t_4}
\frac{dt_5}{1-t_5}\frac{dt_6}{t_6} \\
&=\int_{D-\{t_1\}}\left(\int^{t_2}_0\sum^\infty_{n=0}t^n_1\,dt_1\right) 
\frac{dt_2}{t_2}\frac{dt_3}{1-t_3}\frac{dt_4}{1-t_4}
\frac{dt_5}{1-t_5}\frac{dt_6}{t_6} \\
&=\int_{D-\{t_1,t_2\}}\left(\int^1_{t_3}\sum^\infty_{\ell_2=1}\frac{t_2^{\ell_2-1}}{\ell_2} \,dt_2\right)
\frac{dt_3}{1-t_3}\frac{dt_4}{1-t_4}
\frac{dt_5}{1-t_5}\frac{dt_6}{t_6} \\
&=\sum^\infty_{\ell_2=1}\frac{1}{\ell_2^2} \int_{D-\{t_1,t_2,t_3\}}
\left(\int^1_{t_4}\sum^{\ell_2-1}_{n=0}t_3^n\,dt_3\right)
\frac{dt_4}{1-t_4}\frac{dt_5}{1-t_5}\frac{dt_6}{t_6} \\
&=\sum^\infty_{\ell_2=1}\frac{1}{\ell_2^2} \sum^{\ell_2}_{\ell_1=1}\frac{1}{\ell_1}
\int_{0<t_5<t_6<1}
\left(\int^{t_5}_0\sum^{\ell_1-1}_{n=0}t_4^n\,dt_4\right)
\frac{dt_5}{1-t_5}\frac{dt_6}{t_6} \\
&=\sum^\infty_{\ell_2=1}\frac{1}{\ell_2^2} \sum^{\ell_2}_{\ell_1=1}\frac{1}{\ell_1}
\sum^{\ell_1}_{k_1=1}\frac{1}{k_1} \int_0^1
\left(\int^{t_6}_0\sum^\infty_{n=0}t_5^{n+k_1}\,dt_5\right)\frac{dt_6}{t_6} \\
&=\sum^\infty_{\ell_2=1}\frac{1}{\ell_2^2} \sum^{\ell_2}_{\ell_1=1}\frac{1}{\ell_1}
\sum^{\ell_1}_{k_1=1}\frac{1}{k_1} 
\sum^\infty_{n=1}\frac{1}{(n+k_1)^2}  \ =\ZL{1,2}{1,2}. 
\end{align*}

We need some properties of these three functions $Z_U$, $Z_L$ and $Z_B$, 
please see \cite{ChenE2022} for details:
\begin{align}\label{eq.zlrec}
\zeta(\bm\alpha)\zeta^\star(\bm\beta) 
&=\ZL{\alpha_1,\alpha_2,\ldots,\alpha_r}{\beta_1,\beta_2,\ldots,\beta_m}
+\ZL{\beta_1,\alpha_1,\ldots,\alpha_r}{\beta_2,\beta_3,\ldots,\beta_m},\\
\zeta(\bm\alpha)\zeta^\star(\bm\beta)     \label{eq.zurec}
&=\ZU{\alpha_1,\alpha_2,\ldots,\alpha_r}{\beta_1,\beta_2,\ldots,\beta_m}
+\ZU{\alpha_1,\ldots,\alpha_r,\beta_m}{\beta_1,\ldots,\beta_{m-1}}.
\end{align}
\begin{proposition}\cite[Proposition 4.1]{ChenE2022}
For an $r$-tuple $\bm\alpha=(\alpha_1,\alpha_2,\ldots,\alpha_r)$ 
of positive integers with $\alpha_1\geq 2$, $\alpha_r\geq 2$, we have
\begin{equation}\label{eq.2.3}
\sum^{r}_{k=0}(-1)^{k}\zeta^\star(\alpha_k,\alpha_{k-1},\ldots,\alpha_1)
\zeta(\alpha_{k+1},\alpha_{k+2},\ldots,\alpha_r)
=\left\{\begin{array}{ll}1,&\mbox{ if }r=0,\\
0,&\mbox{ otherwise.}\end{array}\right.
\end{equation}
\end{proposition}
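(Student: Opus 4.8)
The plan is to read \eqref{eq.2.3} as a telescoping sum manufactured from the $Z_L$-recursion \eqref{eq.zlrec}. The case $r=0$ is immediate, both sides being $\zeta^\star(\emptyset)\zeta(\emptyset)=1$, so assume $r\geq 1$. For $0\leq j\leq r-1$ put
\[
A_j:=\ZL{\alpha_{j+1},\alpha_{j+2},\ldots,\alpha_r}{\alpha_j,\alpha_{j-1},\ldots,\alpha_1}
\]
(the lower index being empty when $j=0$), and set $A_r:=0$. Two boundary values follow at once from the defining series of $Z_L$: $A_0=\ZL{\alpha_1,\ldots,\alpha_r}{\emptyset}=\zeta(\alpha_1,\ldots,\alpha_r)$, and $A_{r-1}=\ZL{\alpha_r}{\alpha_{r-1},\ldots,\alpha_1}=\zeta^\star(\alpha_r,\alpha_{r-1},\ldots,\alpha_1)$ (the single $k$-variable plays the role of the smallest term of the weakly increasing $\ell$-chain), with the understanding that for $r=1$ both read as $\zeta(\alpha_1)$.

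The engine of the argument is the single identity
\[
\zeta^\star(\alpha_k,\alpha_{k-1},\ldots,\alpha_1)\,\zeta(\alpha_{k+1},\ldots,\alpha_r)=A_{k-1}+A_k,\qquad 1\leq k\leq r.
\]
For $1\leq k\leq r-1$ this is precisely \eqref{eq.zlrec} applied with the block $(\alpha_{k+1},\ldots,\alpha_r)$ in the $\zeta$-argument and the reversed block $(\alpha_k,\alpha_{k-1},\ldots,\alpha_1)$ in the $\zeta^\star$-argument: the first $Z_L$ on the right-hand side of \eqref{eq.zlrec} is then $A_k$, and the second one (obtained by prepending $\alpha_k$ to its upper index) is $A_{k-1}$. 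For $k=r$ the tuple $(\alpha_{k+1},\ldots,\alpha_r)$ is empty, so the left-hand side collapses to $\zeta^\star(\alpha_r,\ldots,\alpha_1)=A_{r-1}=A_{r-1}+A_r$ by the boundary value above.

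Granting this, \eqref{eq.2.3} falls out by telescoping: the $k=0$ summand of its left-hand side is $\zeta(\alpha_1,\ldots,\alpha_r)=A_0$, so
\[
\sum_{k=0}^{r}(-1)^k\zeta^\star(\alpha_k,\ldots,\alpha_1)\,\zeta(\alpha_{k+1},\ldots,\alpha_r)
=A_0+\sum_{k=1}^{r}(-1)^k\bigl(A_{k-1}+A_k\bigr)
=A_0+\bigl((-1)^{r}A_r-A_0\bigr)=0,
\]
using $A_r=0$. I expect the only point requiring care to be the boundary bookkeeping: one must check that every $Z_L$-value appearing above satisfies its admissibility constraints — the last entry of each upper index is $\alpha_r\geq 2$ and the last entry of each lower index is $\alpha_1\geq 2$, which is exactly where the hypotheses $\alpha_1\geq 2$, $\alpha_r\geq 2$ are used — and that the degenerate cases ($k=0$ and $k=r$, i.e.\ empty upper or lower blocks) are treated with consistent conventions. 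Conceptually, \eqref{eq.2.3} is the stuffle-evaluation of the antipode relation $\sum_{k=0}^{r}S(z_{\alpha_1}\cdots z_{\alpha_k})\ast(z_{\alpha_{k+1}}\cdots z_{\alpha_r})=\varepsilon(z_{\alpha_1}\cdots z_{\alpha_r})\cdot 1$ in the quasi-shuffle Hopf algebra, whose antipode carries a word to $\pm$ the ``star word'' of its reversal; the $A_k$ are the images of the partial sums in that identity, which explains the telescoping.
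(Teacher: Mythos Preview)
Your argument is correct. The telescoping via the $Z_L$-recursion \eqref{eq.zlrec} works exactly as you describe: with $A_j=\ZL{\alpha_{j+1},\ldots,\alpha_r}{\alpha_j,\ldots,\alpha_1}$ one has $\zeta^\star(\alpha_k,\ldots,\alpha_1)\zeta(\alpha_{k+1},\ldots,\alpha_r)=A_{k-1}+A_k$ for $1\le k\le r$, the boundary values $A_0=\zeta(\bm\alpha)$, $A_{r-1}=\zeta^\star(\alpha_r,\ldots,\alpha_1)$, $A_r=0$ are all checked correctly, and the admissibility constraints on $Z_L$ are precisely the hypotheses $\alpha_1\ge 2$, $\alpha_r\ge 2$.

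As for comparison: the paper does \emph{not} supply its own proof of this proposition. It is quoted as \cite[Proposition~4.1]{ChenE2022} and used as a black box. That said, your proof is very much in the spirit of the surrounding material: immediately after stating the proposition, the paper iterates the same recursion \eqref{eq.zlrec} to derive the closed forms \eqref{eq.2.6}--\eqref{eq.2.7} for $Z_L$. Your argument is essentially that same iteration, specialized so that the $\zeta$-block and the $\zeta^\star$-block are the two halves of a single tuple $\bm\alpha$, and then repackaged as an alternating telescoping sum rather than an explicit unrolling. In effect you have given a short self-contained proof that the paper could have included instead of the citation. The closing remark about the quasi-shuffle antipode is a nice conceptual gloss, though not needed for the argument to stand.
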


Let $\bm\alpha=(s,\ldots,s)=(\{s\}^r)$ in Eq.\,(\ref{eq.2.3}). 
We obtain the classical known result \cite[Eq.\,(5.8)]{ChenE2022}
\begin{equation}\label{eq.2.4}
\sum_{a+b=r}(-1)^a\zeta(\{s\}^a)\zeta^\star(\{s\}^b)
=\left\{\begin{array}{ll}1,&\mbox{ if }r=0,\\
0, & \mbox{ otherwise.}\end{array}\right.
\end{equation}

We derive an evaluation of $Z_L$  using Eq.\,(\ref{eq.zlrec}).
\begin{align*}
\ZL{\alpha_1,\ldots,\alpha_r}{\beta_1,\ldots,\beta_m}
&=\zeta(\alpha_1,\ldots,\alpha_r)\zeta^\star(\beta_1,\ldots,\beta_m)
-\ZL{\beta_1,\alpha_1,\ldots,\alpha_r}{\beta_2,\ldots,\beta_m} \\
&=\zeta(\alpha_1,\ldots,\alpha_r)\zeta^\star(\beta_1,\ldots,\beta_m)
-\zeta(\beta_1,\alpha_1,\ldots, \alpha_r)\zeta^\star(\beta_2,\ldots,\beta_m) \\
&\qquad\qquad\qquad\qquad
+(-1)^2\ZL{\beta_2,\beta_1,\alpha_1,\ldots,\alpha_r}{\beta_3,\ldots,\beta_m}.
\end{align*}
We use Eq.\,(\ref{eq.zlrec}) repeatedly until the vector in the second row is the empty set.
Since 
$$
\ZL{\beta_m,\beta_{m-1},\ldots,\beta_1,\alpha_1,\ldots,\alpha_r}{\emptyset}
=\zeta(\beta_m,\beta_{m-1},\ldots,\beta_1,\alpha_1,\ldots,\alpha_r),
$$
we arrive a formula for the function $Z_L$:
$$
\ZL{\bm\alpha}{\bm\beta}
=\sum_{a+b=m}(-1)^a\zeta(\beta_a,\ldots,\beta_1,\alpha_1,\ldots,\alpha_r)
\zeta^\star(\beta_{a+1},\ldots,\beta_m).
$$

If we use Eq.\,(\ref{eq.zlrec}) in a different direction
$$
\ZL{\alpha_1,\ldots,\alpha_r}{\beta_1,\ldots,\beta_m}
=\zeta(\alpha_2,\ldots,\alpha_r)\zeta^\star(\alpha_1,\beta_1,\ldots,\beta_m)
-\ZL{\alpha_2,\ldots,\alpha_r}{\alpha_1,\beta_1,\beta_2,\ldots,\beta_m},
$$
 then we obtain the following 
formula for $Z_L$.
$$
\ZL{\alpha_1,\ldots,\alpha_r}{\beta_1,\ldots,\beta_m}
=\sum_{a+b=r-1}(-1)^a\zeta(\alpha_{a+2},\ldots,\alpha_r)
\zeta^\star(\alpha_{a+1},\ldots,\alpha_1,\beta_1,\ldots,\beta_m).  
$$
Similarly, we also use Eq.\,(\ref{eq.zurec}) to get two formulas of the function $Z_U$.
We conclude these four formulas in the following propositions.

\begin{proposition}
For any nonnegative integers $m,r$,
an $r$-tuple $\bm\alpha=(\alpha_1,\alpha_2,\ldots,\alpha_r)$ of positive integers, and
an $m$-tupe $\bm\beta=(\beta_1,\beta_2,\ldots,\beta_m)$ of positive integers,
with $\alpha_r\geq 2$, $\beta_m\geq 2$, we have
\begin{align}
\ZL{\bm\alpha}{\bm\beta}   \label{eq.2.6}
&=\sum_{a+b=m}(-1)^a\zeta(\beta_a,\ldots,\beta_1,\alpha_1,\ldots,\alpha_r)
\zeta^\star(\beta_{a+1},\ldots,\beta_m) \\
&=\sum_{a+b=r-1}(-1)^a\zeta(\alpha_{a+2},\ldots,\alpha_r)
\zeta^\star(\alpha_{a+1},\ldots,\alpha_1,\beta_1,\ldots,\beta_m).  \label{eq.2.7}
\end{align}
And
\begin{align}
\ZU{\bm\alpha}{\bm\beta}   \label{eq.2.8}
&=\sum_{a+b=m}(-1)^a\zeta(\alpha_1,\ldots,\alpha_r,\beta_m,\ldots,\beta_{b+1})
\zeta^\star(\beta_1,\ldots,\beta_b) \\
&=\sum_{a+b=r-1}(-1)^a\zeta(\alpha_1,\ldots,\alpha_b)
\zeta^\star(\beta_1,\ldots,\beta_m,\alpha_r,\ldots,\alpha_{b+1}), \label{eq.2.9}
\end{align}
where $\beta_i\geq 2\ (1\leq i\leq m)$ for Eq.\,(\ref{eq.2.8}) 
and $\alpha_j\geq 2\ (1\leq j\leq r)$ for Eq.\,(\ref{eq.2.9}).
\end{proposition}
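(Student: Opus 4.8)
The plan is to observe first that \eqref{eq.2.6} and \eqref{eq.2.7} are exactly what the two telescoping computations displayed just before the statement produce, and then to run the mirror-image argument for $Z_U$. For \eqref{eq.2.6} one iterates \eqref{eq.zlrec} in the rearranged form $\ZL{\bm\alpha}{\bm\beta}=\zeta(\bm\alpha)\zeta^\star(\bm\beta)-\ZL{\beta_1,\alpha_1,\ldots,\alpha_r}{\beta_2,\ldots,\beta_m}$, each pass moving one more entry of $\bm\beta$ onto the front of the upper slot, until after $m$ passes the lower slot is empty and $\ZL{\beta_m,\ldots,\beta_1,\alpha_1,\ldots,\alpha_r}{\emptyset}=\zeta(\beta_m,\ldots,\beta_1,\alpha_1,\ldots,\alpha_r)$; collecting the accumulated signs gives \eqref{eq.2.6}. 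For \eqref{eq.2.7} one instead applies \eqref{eq.zlrec} to the pair $(\alpha_2,\ldots,\alpha_r)$, $(\alpha_1,\beta_1,\ldots,\beta_m)$, which gives a recursion peeling $\alpha_1,\alpha_2,\ldots$ off the front of the upper slot; this can be run $r-1$ times, terminating at $\ZL{\alpha_r}{\alpha_{r-1},\ldots,\alpha_1,\beta_1,\ldots,\beta_m}=\zeta^\star(\alpha_r,\alpha_{r-1},\ldots,\alpha_1,\beta_1,\ldots,\beta_m)$ (a one-entry upper slot evaluated straight from the series), and this is \eqref{eq.2.7}.

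For \eqref{eq.2.8} I would rewrite \eqref{eq.zurec} as
\[
\ZU{\alpha_1,\ldots,\alpha_r}{\beta_1,\ldots,\beta_m}
=\zeta(\bm\alpha)\zeta^\star(\bm\beta)-\ZU{\alpha_1,\ldots,\alpha_r,\beta_m}{\beta_1,\ldots,\beta_{m-1}}
\]
and iterate it, each pass stripping the last entry off the lower slot and appending it to the end of the upper slot; after $m$ passes the lower slot is empty and the term $\ZU{\alpha_1,\ldots,\alpha_r,\beta_m,\ldots,\beta_1}{\emptyset}=\zeta(\alpha_1,\ldots,\alpha_r,\beta_m,\ldots,\beta_1)$ remains, so reading off the $(-1)^a$-weighted sum over $a+b=m$ gives \eqref{eq.2.8}. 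For \eqref{eq.2.9} I would instead apply \eqref{eq.zurec} to the pair $(\alpha_1,\ldots,\alpha_{r-1})$, $(\beta_1,\ldots,\beta_m,\alpha_r)$, which rearranges to
\[
\ZU{\alpha_1,\ldots,\alpha_r}{\beta_1,\ldots,\beta_m}
=\zeta(\alpha_1,\ldots,\alpha_{r-1})\zeta^\star(\beta_1,\ldots,\beta_m,\alpha_r)
-\ZU{\alpha_1,\ldots,\alpha_{r-1}}{\beta_1,\ldots,\beta_m,\alpha_r},
\]
and iterate $r-1$ times, peeling $\alpha_r,\alpha_{r-1},\ldots,\alpha_2$ off the end of the upper slot onto the end of the lower slot; the process stops at $\ZU{\alpha_1}{\beta_1,\ldots,\beta_m,\alpha_r,\ldots,\alpha_2}$, and a one-entry upper slot satisfies $\ZU{\alpha_1}{\delta_1,\ldots,\delta_p}=\zeta^\star(\delta_1,\ldots,\delta_p,\alpha_1)$ straight from the definition. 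Reindexing the accumulated sum by the length $b$ of the surviving $\zeta$-factor, with $a=r-1-b$, produces \eqref{eq.2.9}.

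It remains to note that admissibility is preserved at every step: under the stated hypotheses the last entry of the upper slot of each intermediate $Z_U$ is $\geq 2$, so it converges, and every $\zeta$ or $\zeta^\star$ that appears on the right also ends in an entry $\geq 2$. There is no genuine obstacle here; the argument is purely formal, and the only thing that really needs care is the bookkeeping --- keeping straight the sign $(-1)^a$, the two different summation ranges ($a+b=m$ for \eqref{eq.2.8} but $a+b=r-1$ for \eqref{eq.2.9}), and the correct identification of the two terminal evaluations $\ZU{\gamma_1,\ldots,\gamma_n}{\emptyset}=\zeta(\gamma_1,\ldots,\gamma_n)$ and $\ZU{\alpha_1}{\delta_1,\ldots,\delta_p}=\zeta^\star(\delta_1,\ldots,\delta_p,\alpha_1)$.
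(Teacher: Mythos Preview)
Your proposal is correct and coincides with the paper's own argument: the paper derives \eqref{eq.2.6} and \eqref{eq.2.7} by precisely the two iterated applications of \eqref{eq.zlrec} you describe (these are the displayed computations immediately preceding the proposition), and then says ``Similarly, we also use Eq.\,(\ref{eq.zurec}) to get two formulas of the function $Z_U$,'' which is exactly the mirror-image iteration you spell out for \eqref{eq.2.8} and \eqref{eq.2.9}. Your explicit check that the terminal one-entry cases reduce to $\zeta$ or $\zeta^\star$ and that admissibility is preserved throughout is a useful addition the paper leaves implicit.
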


We use a formula among the function $Z_B$ and $Z_U$
\cite[Proposition 7.1]{ChenE2022}
\begin{equation}\label{eq.zbu}
(-1)^m\ZB{\alpha_{m+1},\ldots,\alpha_r}{\alpha_m,\ldots,\alpha_1}
=\zeta(\bm\alpha)+\sum^m_{k=1}(-1)^k
\ZU{\alpha_{k+1},\ldots,\alpha_r}{\alpha_k,\ldots,\alpha_1},
\end{equation}
where $\alpha_r\geq 2$ and $1\leq m\leq r$,
and Eq.\,(\ref{eq.2.8}) to get 
an explicit formula for the function $Z_B$:
\begin{equation}\label{eq.2.10}
\ZB{\alpha_{m+1},\ldots,\alpha_r}{\alpha_m,\ldots,\alpha_1}
=\sum_{0\leq j\leq d\leq m}(-1)^{m+j-d}
\zeta(\alpha_{d+1},\ldots,\alpha_r,\alpha_1,\ldots,\alpha_j)
\zeta^\star(\alpha_d,\ldots,\alpha_{j+1}).
\end{equation}

\section{Symmetric Formulas}\label{sec.4}
We simplify our notations with 
$\oright{\alpha}{i}{j}=(\alpha_{i+1},\alpha_2,\ldots,\alpha_j)$, 
$\oleft{\alpha}{i}{j}=(\alpha_j,\alpha_{j-1},\ldots,\alpha_{i+1})$,
for $i<j$. Note that if $i\geq j$, we set 
$\oright{\alpha}{i}{j}=\oleft{\alpha}{i}{j}=\emptyset$. Therefore, the vector $\bm\alpha$ 
is $\oright{\alpha}{0}{r}$.
\begin{proposition}
For any nonnegative integers $m,r, \theta$,
an $r$-tuple $\bm\alpha=(\alpha_1,\alpha_2,\ldots,\alpha_r)$ of positive integers, and
an $m$-tupe $\bm\beta=(\beta_1,\beta_2,\ldots,\beta_m)$ of positive integers, we have
\begin{align} \label{eq.new01}
\sum^m_{k=0}(-1)^k\zeta(\oright{\beta}{k}{m})
\ZU{{\bm\alpha}}{\oleft{\beta}{0}{k}}
&=\zeta({\bm\alpha,\bm\beta}), \quad\mbox{for }\alpha_r\geq 2,\\
\sum^r_{k=0}(-1)^{r-k}\zeta(\oright{\alpha}{0}{k})         \label{eq.new02}
\ZL{\bm\beta}{\oleft{\alpha}{k}{r}}
&=\zeta(\bm\alpha,\bm\beta), \quad\mbox{for }\alpha_i,\beta_m\geq 2, 1\leq i\leq r,\\
\sum_{k=0}^m(-1)^k\zeta^\star(\oright{\beta}{k}{m})    \label{eq.new03}
\ZU{\oleft{\beta}{0}{k},\theta}{\bm\alpha}
&=\zeta^\star({\bm\alpha},\theta,{\bm\beta}), \quad\mbox{for }\theta,\beta_m\geq 2,\\
\sum^r_{k=0}(-1)^{r-k}\zeta^\star(\oright{\alpha}{0}{k})        \label{eq.new04}
\ZL{\theta,\oleft{\alpha}{k}{r}}{{\bm\beta}}
&=\zeta^\star({\bm\alpha},\theta,{\bm\beta}), \quad\mbox{for }\alpha_i,\theta,\beta_m\geq 2,
1\leq i\leq r.
\end{align}
\end{proposition}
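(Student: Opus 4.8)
The plan is to prove all four identities \eqref{eq.new01}--\eqref{eq.new04} by the same mechanism: each is the telescoping consequence of one of the two recurrences \eqref{eq.zlrec}, \eqref{eq.zurec}, applied iteratively until one of the two argument vectors is exhausted. Concretely, for \eqref{eq.new01} I would start from \eqref{eq.zurec} written with the roles arranged so that peeling the last entry $\beta_m$ off the lower vector and attaching it to the upper one produces $\ZU{\bm\alpha}{\beta_1,\ldots,\beta_{m-1}}$ plus a boundary term $\ZU{\bm\alpha,\beta_m}{\beta_1,\ldots,\beta_{m-1}}$; but here I actually want to peel entries off the \emph{other} end, using the ``different direction'' of the recurrence exactly as the excerpt does when deriving \eqref{eq.2.8} and \eqref{eq.2.9}. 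Iterating $m$ times collapses the $Z_U$ on the left into the single multiple zeta value $\zeta(\bm\alpha,\bm\beta)$, with the alternating signs and the factors $\zeta(\oright{\beta}{k}{m})$ accumulating precisely as the sum on the left-hand side. Indeed, \eqref{eq.new01} is essentially a rewriting of \eqref{eq.2.8}: setting $\ZU{\bm\alpha}{\bm\beta}$ on one side and moving all but the $k=m$ term over yields \eqref{eq.new01}, so the cleanest route is to quote \eqref{eq.2.8} directly and rearrange. The same remark applies to \eqref{eq.new02} as a rearrangement of \eqref{eq.2.7} (or \eqref{eq.2.6}), so two of the four are immediate corollaries of the Proposition already in hand.

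For the starred identities \eqref{eq.new03} and \eqref{eq.new04}, I would reduce them to the unstarred ones. The key observation is that $\zeta^\star(\oright{\beta}{k}{m})$ expands, by the standard star-to-nonstar expansion (a sum over all ways of merging adjacent arguments into a single multiple zeta value, with signs), into combinations of $\zeta$-values, and likewise $\ZU{\oleft{\beta}{0}{k},\theta}{\bm\alpha}$ has $\theta\ge 2$ guaranteeing admissibility. However, a slicker approach avoids the expansion entirely: \eqref{eq.new03} is exactly \eqref{eq.2.8} applied to the index $\ZU{\bm\alpha,\theta,\bm\beta}{\emptyset}$ --- wait, more carefully, I would take \eqref{eq.2.9} with the vector $(\bm\beta,\theta)$ playing a suitable role, or better, observe that $\zeta^\star(\bm\alpha,\theta,\bm\beta)$ itself has an integral representation as $I$ of the hook-type poset, and the left-hand sum of \eqref{eq.new03} is the decomposition of that poset obtained by repeatedly splitting at the branch point. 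So the honest plan is: realize each of \eqref{eq.new03}, \eqref{eq.new04} as an instance of the already-proven Proposition by choosing the indices appropriately (replacing $\bm\alpha$ by $(\bm\beta,\theta)$ or similar and invoking $\zeta^\star(\mathbf x) = Z_?(\cdots)$-type identities), together with the defining relation between $\zeta^\star$ of a concatenation and the $Z_U$/$Z_L$ functions. In effect \eqref{eq.new03} should follow from \eqref{eq.new01} with $\bm\alpha$ replaced by $(\bm\beta,\theta)$ read backwards, since $\zeta^\star(\bm\alpha,\theta,\bm\beta) = \ZU{\bm\beta,\theta}{\ldots}$-type expression; and \eqref{eq.new04} from \eqref{eq.new02} symmetrically.

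Carrying this out, the order of steps is: (i) restate \eqref{eq.new01} and \eqref{eq.new02} as direct rearrangements of \eqref{eq.2.8}/\eqref{eq.2.9} and \eqref{eq.2.6}/\eqref{eq.2.7} respectively --- here I just isolate the $k=m$ (resp.\ $k=r$) term and note $\zeta^\star(\emptyset)=1$, so these lines are one-liners; (ii) establish the auxiliary identity expressing $\zeta^\star(\bm\alpha,\theta,\bm\beta)$ (and the partial-sum $\zeta^\star$'s appearing as coefficients) in terms of the hook/anti-hook functions $Z_U$ and $Z_L$, which is where the condition $\theta\ge 2$ enters to keep everything admissible; (iii) substitute and match the alternating sums. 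The main obstacle I anticipate is step (iii): keeping the index reversals $\oleft{\beta}{0}{k}$ versus $\oright{\beta}{k}{m}$ straight and verifying that the signs $(-1)^k$ versus $(-1)^{r-k}$ come out correctly after the substitution --- a purely bookkeeping difficulty, but an error-prone one, since the starred cases involve composing two reversals. I would guard against this by checking the smallest nontrivial cases ($m=1$, $r=1$) explicitly before asserting the general pattern. No genuinely new analytic input is needed beyond the recurrences \eqref{eq.zlrec}, \eqref{eq.zurec} and Yamamoto's integral formalism already recalled in Section~\ref{sec.2}.
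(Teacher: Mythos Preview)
Your central claim---that \eqref{eq.new01} is ``essentially a rewriting of \eqref{eq.2.8}''---is incorrect, and this is where the proposal breaks down. Equation \eqref{eq.2.8} expresses a \emph{single} $Z_U$ as an alternating sum of products $\zeta(\cdots)\,\zeta^\star(\cdots)$; equation \eqref{eq.new01} expresses a \emph{single} $\zeta$ as an alternating sum of products $\zeta(\cdots)\,Z_U(\cdots)$, with non-starred coefficients and with the lower argument of $Z_U$ \emph{reversed}. Already at $m=2$ the two formulas are visibly different: \eqref{eq.2.8} involves $\zeta^\star(\beta_1,\beta_2)$ and $\ZU{\bm\alpha}{\beta_1,\beta_2}$, while \eqref{eq.new01} involves $\zeta(\beta_1,\beta_2)$ and $\ZU{\bm\alpha}{\beta_2,\beta_1}$. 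They are related not by rearrangement but by an \emph{inversion}: substituting \eqref{eq.2.8} into the left side of \eqref{eq.new01} and collapsing via \eqref{eq.2.3} does yield the right side---but this is a genuine computation you have not indicated, and moreover \eqref{eq.2.8} carries the hypothesis $\beta_i\ge 2$ for every $i$, so even the repaired argument proves only a restricted version of \eqref{eq.new01}. The recurrences \eqref{eq.zlrec}, \eqref{eq.zurec} telescope to \eqref{eq.2.6}--\eqref{eq.2.9}, not to \eqref{eq.new01}--\eqref{eq.new04}; the latter require a \emph{different} recurrence.

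The paper's proof supplies exactly that missing recurrence, and it is the poset-splitting idea you briefly float (then abandon) for \eqref{eq.new03}. One writes $\zeta(\bm\alpha,\bm\beta)$ as the Yamamoto integral of the totally ordered chain, then iteratively peels off the top block: at step $i$ the intermediate poset (the $\bm\alpha$-chain, a hook for $\oleft{\beta}{0}{i}$, and the $\oright{\beta}{i}{m}$-chain above it) satisfies
\[
I(\text{poset}_i)=\ZU{\bm\alpha}{\oleft{\beta}{0}{i}}\cdot\zeta(\oright{\beta}{i}{m})-I(\text{poset}_{i+1}),
\]
simply because the product of two poset integrals equals the integral over the disjoint union, which decomposes according to whether the top of the hook lies below the bottom of the remaining chain. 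Telescoping this gives \eqref{eq.new01} directly, with no extra hypotheses on the $\beta_i$ beyond convergence; the other three identities are entirely analogous. So the workable route is the one you mentioned only in passing: commit to the Yamamoto-integral decomposition from the start, rather than trying to bootstrap from \eqref{eq.2.6}--\eqref{eq.2.9}.
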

\begin{proof}
We present the process to get first equation, the other equations are similarly to get.
Since $\zeta(\bm\alpha,\bm\beta)$ can be represented as 
\begin{align*}
I\left(
\begin{xy}
{(0,-14) \ar @{{*}-o} (4,-10)}, 
{(4,-10) \ar @{.o} (10,-4)}, 
{(10,-4) \ar @[red] @{{.}-} (14,0)}, 
{(14,0) \ar @{{*}.o} (20,6)}, 
{(20,6) \ar @{.o} (24,10)}, 
{(0,-13) \ar @/^2mm/ @{-}^{\bm\alpha} (9,-4)}, 
{(14,1) \ar @/^2mm/ @{-}^{\bm\beta} (23,11)} 
\end{xy}
\right)
&=
I\left(
\begin{xy}
{(0,-3) \ar@{{*}.o} |-{\bm\alpha} (8,5)},
{(0,-2) \ar@/^2mm/ @{-} (7,5)},
\end{xy}
\right)\cdot
I\left(
\begin{xy}
{(0,-3) \ar@{{*}.o} |-{\bm\beta} (8,5)},
{(0,-2) \ar@/^2mm/ @{-} (7,5)},
\end{xy}
\right)-I\left(
\begin{xy}
{(0,-10) \ar @{{*}-o} (4,-6)}, 
{(4,-6) \ar @{.o} (10,0)}, 
{(0,-9) \ar @/^2mm/ @{-}^{\bm\alpha} (9,0)}, 
{(10,0) \ar @[red] @{{.}-} (14,-4)}, 
{(14,-4) \ar @{{*}.o} (20,2)}, 
{(20,2) \ar @{.o} (24,6)}, 
{(14,-3) \ar @/^2mm/ @{-}^{\bm\beta} (23,7)} 
\end{xy}
\right)\\
&=\zeta(\bm\alpha)\cdot\zeta(\bm\beta)-
I\left(
\begin{xy}
{(0,-10) \ar @{{*}-o} (4,-6)}, 
{(4,-6) \ar @{.o} (10,0)}, 
{(10,0) \ar @[red] @{{.}-} (14,-4)}, 
{(14,-4) \ar @{{*}.o} (20,2)}, 
{(20,2) \ar @{.o} (24,6)}, 
{(0,-9) \ar @/^2mm/ @{-}^{\bm\alpha} (9,0)}, 
{(14,-3) \ar @/^2mm/ @{-}^{\bm\beta} (23,7)} 
\end{xy}
\right).
\end{align*}
Since for an integer $i$ with $1\leq i\leq m$, we have 
\begin{align*}
&I\left(
\begin{xy}
{(0,-12) \ar @{{*}-o} (4,-8)}, 
{(4,-8) \ar @{.o} (10,-2)}, 
{(0,-11) \ar @/^2mm/ @{-}^{\bm\alpha} (9,-2)}, 
{(10,-2) \ar @[red] @{{.}-} (12,-5)},
{(12,-5) \ar@{-} (18,1)},
{(12,-5) \ar@{{*}o} |-{\oleft{\bm\beta}{0}{i}} (30,1)},
{(12,-5) \ar@{.} (24,-5)},
{(18,1) \ar@{.} (30,1)},
{(24,-5) \ar@{-} (30,1)}, 
{(30,1) \ar @[red] @{{.}-} (33,4)},
{(33,4) \ar @{{*}-o} (37,8)}, 
{(37,8) \ar @{.o} (43,14)}, 
{(33,5) \ar @/^2mm/ @{-}^{\oright{\bm\beta}{i}{m}} (43,14)}
\end{xy}\right)\\
&=I\left(
\begin{xy}
{(0,-6) \ar @{{*}-o} (4,-2)}, 
{(4,-2) \ar @{.o} (10,4)}, 
{(0,-5) \ar @/^2mm/ @{-}^{\bm\alpha} (9,4)}, 
{(10,4) \ar @[red] @{{.}-} (12,1)},
{(12,1) \ar@{-} (18,7)},
{(12,1) \ar@{{*}o} |-{\oleft{\beta}{0}{i}} (30,7)},
{(12,1) \ar@{.} (24,1)},
{(18,7) \ar@{.} (30,7)},
{(24,1) \ar@{-} (30,7)}
\end{xy}\,\right)\cdot
I\left(
\begin{xy}
{(0,-5) \ar @{{*}-o} (4,-1)}, 
{(4,-1) \ar @{.o} (10,5)}, 
{(0,-4) \ar @/^2mm/ @{-}^{\oright{\bm\beta}{i}{m}} (10,5)}
\end{xy}\right)
-I\left(
\begin{xy}
{(0,-12) \ar @{{*}-o} (4,-8)}, 
{(4,-8) \ar @{.o} (10,-2)}, 
{(0,-11) \ar @/^2mm/ @{-}^{\bm\alpha} (9,-2)}, 
{(10,-2) \ar @[red] @{{.}-} (12,-5)},
{(12,-5) \ar@{-} (18,1)},
{(12,-5) \ar@{{*}o} |-{\oleft{\beta}{0}{i+1}} (34,1)},
{(12,-5) \ar@{.} (28,-5)},
{(18,1) \ar@{.} (34,1)},
{(28,-5) \ar@{-} (34,1)}, 
{(34,1) \ar @[red] @{{.}-} (37,4)},
{(37,4) \ar @{{*}-o} (41,8)}, 
{(41,8) \ar @{.o} (47,14)}, 
{(37,5) \ar @/^2mm/ @{-}^{\oright{\bm\beta}{i+1}{m}} (47,14)}
\end{xy}\right)\\
&=\ZU{\bm\alpha}{\oleft{\beta}{0}{i}}
\cdot \zeta(\oright{\beta}{i}{m})
-I\left(
\begin{xy}
{(0,-12) \ar @{{*}-o} (4,-8)}, 
{(4,-8) \ar @{.o} (10,-2)}, 
{(0,-11) \ar @/^2mm/ @{-}^{\bm\alpha} (9,-2)}, 
{(10,-2) \ar @[red] @{{.}-} (12,-5)},
{(12,-5) \ar@{-} (18,1)},
{(12,-5) \ar@{{*}o} |-{\oleft{\beta}{0}{i+1}} (34,1)},
{(12,-5) \ar@{.} (28,-5)},
{(18,1) \ar@{.} (34,1)},
{(28,-5) \ar@{-} (34,1)}, 
{(34,1) \ar @[red] @{{.}-} (37,4)},
{(37,4) \ar @{{*}-o} (41,8)}, 
{(41,8) \ar @{.o} (47,14)}, 
{(37,5) \ar @/^2mm/ @{-}^{\oright{\bm\beta}{i+1}{m}} (47,14)}
\end{xy}\right).
\end{align*}

Continue this process  we have
\begin{align*}
I\left(
\begin{xy}
{(0,-10) \ar @{{*}-o} (4,-6)}, 
{(4,-6) \ar @{.o} (10,0)}, 
{(10,0) \ar @[red] @{{.}-} (14,-4)}, 
{(14,-4) \ar @{{*}.o} (20,2)}, 
{(20,2) \ar @{.o} (24,6)}, 
{(0,-9) \ar @/^2mm/ @{-}^{\bm\alpha} (9,0)}, 
{(14,-3) \ar @/^2mm/ @{-}^{\bm\beta} (23,7)} 
\end{xy}\right)=\sum^m_{k=1}(-1)^{k-1}\ZU{\bm\alpha}{\oleft{\beta}{0}{k}}
\zeta(\oright{\beta}{k}{m}).
\end{align*}
Substituing into the original equation, we get the conclusion.
\end{proof}

\begin{theorem}\label{thm.41}
Let $r,m,\theta$ be nonnegative integers with $\theta\geq 2$, 
and the vectors $\bm\alpha=(\alpha_1,\alpha_2,\ldots,\alpha_r)$,
$\bm\beta=(\beta_1,\beta_2,\ldots,\beta_m)$ with $\alpha_i\geq 2$, $\beta_m\geq 2$,
for $1\leq i\leq r$. Then
\begin{align}    \label{eq.4.1}
\sum_{a+b=r\atop c+d=m}
(-1)^{a+c}\zeta^\star(\oright{\alpha}{0}{b})
\zeta(\oleft{\beta}{0}{c},\theta,\oleft{\alpha}{b}{r})
\zeta^\star(\oright{\beta}{c}{m})
&=\zeta^\star(\bm\alpha,\theta,\bm\beta),\\
\sum_{a+b=r\atop c+d=m}    \label{eq.4.2}
(-1)^{a+c}\zeta(\oright{\alpha}{0}{b})
\zeta^\star(\oleft{\beta}{0}{c},\theta,\oleft{\alpha}{b}{r})
\zeta(\oright{\beta}{c}{m})
&=\zeta(\bm\alpha,\theta,\bm\beta).
\end{align}
\end{theorem}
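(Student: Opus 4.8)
The plan is to deduce Theorem~\ref{thm.41} from the $Z_U$/$Z_L$ expansion formulas of the preceding Proposition, following the same strategy that produced the symmetric identity \eqref{eq.2.4} from \eqref{eq.2.3}. I will prove \eqref{eq.4.1}; the identity \eqref{eq.4.2} is entirely parallel with the roles of $\zeta$ and $\zeta^\star$ interchanged and the corresponding $Z_U$/$Z_L$ formulas used. The starting point is \eqref{eq.new03}, which expresses $\zeta^\star(\bm\alpha,\theta,\bm\beta)$ as
$$
\zeta^\star(\bm\alpha,\theta,\bm\beta)=\sum_{k=0}^m(-1)^k\zeta^\star(\oright{\beta}{k}{m})\ZU{\oleft{\beta}{0}{k},\theta}{\bm\alpha}.
$$
So it suffices to show that the left-hand side of \eqref{eq.4.1} equals this same sum; equivalently, after collecting the $c$-index with $k$, it suffices to prove for each fixed $k$ (i.e.\ each fixed $\oleft{\beta}{0}{k}$) the inner identity
$$
\ZU{\oleft{\beta}{0}{k},\theta}{\bm\alpha}=\sum_{a+b=r}(-1)^{a}\zeta^\star(\oright{\alpha}{0}{b})\,\zeta(\oleft{\beta}{0}{k},\theta,\oleft{\alpha}{b}{r}).
$$

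The natural tool for this inner identity is \eqref{eq.2.9}, the second expansion of $Z_U$ in the Proposition preceding Section~3. Applying \eqref{eq.2.9} with the ``upper'' index being $(\oleft{\beta}{0}{k},\theta)$ and the ``lower'' index being $\bm\alpha=(\alpha_1,\dots,\alpha_r)$: since \eqref{eq.2.9} sums over a split $a+b=(\text{length of lower index})-1$, this gives
$$
\ZU{\oleft{\beta}{0}{k},\theta}{\bm\alpha}=\sum_{a+b=r-1}(-1)^a\zeta\big((\oleft{\beta}{0}{k},\theta,\alpha_1,\dots,\alpha_b)\big)\,\zeta^\star\big((\alpha_1,\dots,\alpha_r)\ \text{reversed appropriately}\big),
$$
and I would check that the reversed $\zeta^\star$-index and the index shifts match the term $\zeta^\star(\oright{\alpha}{0}{b})\,\zeta(\oleft{\beta}{0}{k},\theta,\oleft{\alpha}{b}{r})$ after re-indexing $b\mapsto b$, $a\mapsto a+\text{(off-by-one from the }\theta\text{ slot)}$. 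The admissibility hypotheses $\alpha_i\geq 2$, $\theta\geq2$, $\beta_m\geq2$ are exactly what make every $\zeta$ and $\zeta^\star$ in sight convergent, and in particular $\alpha_i\geq2$ is what \eqref{eq.2.9} requires. Substituting the inner identity back and summing over $k$ (relabeled as the $c$-index, with $d=m-c$) reproduces the double sum $\sum_{a+b=r,\,c+d=m}(-1)^{a+c}\zeta^\star(\oright{\alpha}{0}{b})\zeta(\oleft{\beta}{0}{c},\theta,\oleft{\alpha}{b}{r})\zeta^\star(\oright{\beta}{c}{m})$, which is \eqref{eq.4.1}.

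An alternative, more uniform route — and the one I would actually write up if the bookkeeping above turns out awkward — is to go directly through Yamamoto's integral: realize $\zeta^\star(\bm\alpha,\theta,\bm\beta)$ as $I(X)$ for the totally ordered $2$-poset carrying the hook/anti-hook decorations for the $\zeta^\star$ blocks, then ``cut'' the Hasse diagram at the unique $\circ$-vertex labelled by the $\theta$-slot's maximal element. Splitting the poset at that vertex into a left piece (an anti-hook shape on $\bm\alpha$, feeding a $Z_U$) and a right piece (a hook shape on $\bm\beta$, feeding a $\zeta^\star$), and then iterating the algebra relation $I(PQ)=I(P)I(Q)$ together with the telescoping identities \eqref{eq.zlrec}--\eqref{eq.zurec} exactly as in the proof of the Proposition just above, peels off one $\beta$ (resp.\ $\alpha$) at a time with alternating signs. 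The two directions of peeling — from the $\bm\alpha$ side and from the $\bm\beta$ side — produce the two independent alternating sums indexed by $a+b=r$ and $c+d=m$, and their product form is forced by the disjoint-union multiplicativity of $I$. The main obstacle in either approach is purely notational: keeping the reversal operators $\oright{\cdot}{\cdot}{\cdot}$, $\oleft{\cdot}{\cdot}{\cdot}$ consistent across the $\theta$-pivot and verifying that the index of the middle factor is exactly $(\oleft{\beta}{0}{c},\theta,\oleft{\alpha}{b}{r})$ and not an off-by-one variant; once the $k=0$ and $r=m=0$ boundary cases are checked by hand (where the sum collapses to $\zeta^\star(\theta)=I$ of a single hook, consistent with $\zeta(\emptyset)=\zeta^\star(\emptyset)=1$), the inductive/telescoping step carries no analytic difficulty.
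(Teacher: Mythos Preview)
Your strategy is exactly the paper's: start from \eqref{eq.new03} and expand each $Z_U$ term, then sum. However, you invoke the wrong expansion. The inner identity you need,
\[
\ZU{\oleft{\beta}{0}{k},\theta}{\bm\alpha}
=\sum_{a+b=r}(-1)^{a}\zeta(\oleft{\beta}{0}{k},\theta,\oleft{\alpha}{b}{r})\,\zeta^\star(\oright{\alpha}{0}{b}),
\]
is an instance of \eqref{eq.2.8}, not \eqref{eq.2.9}. Formula \eqref{eq.2.8} splits the \emph{lower} index (here $\bm\alpha$, of length $r$, giving the sum over $a+b=r$) and its hypothesis ``$\beta_i\geq 2$ for all $i$'' becomes precisely your assumption $\alpha_i\geq 2$. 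By contrast \eqref{eq.2.9} splits the \emph{upper} index, which here is $(\oleft{\beta}{0}{k},\theta)$; that would give a sum over $a+b=k$, produce $\zeta^\star$-factors containing $\bm\alpha$ rather than $\zeta$-factors, and require all $\beta_i\geq 2$, which is not assumed. Your displayed formula after ``this gives'' is a hybrid of the two and is not correct as written (the range $a+b=r-1$ matches neither, and the $\alpha$'s in the $\zeta$-argument should be reversed).

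For \eqref{eq.4.2} the argument is not quite a mirror with $\zeta\leftrightarrow\zeta^\star$: the paper starts from \eqref{eq.new01} applied to $(\bm\alpha,\theta)$ in place of $\bm\alpha$, and then expands $\ZU{\bm\alpha,\theta}{\oleft{\beta}{0}{c}}$ via \eqref{eq.2.9} (now the upper index is $(\bm\alpha,\theta)$, length $r+1$, whose entries are all $\geq 2$). With these two corrections your write-up becomes the paper's proof verbatim; the Yamamoto-integral alternative you sketch is unnecessary.
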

\begin{proof}
First we substitute the representation Eq.\,(\ref{eq.2.8}) of $Z_U$:
$$
\ZU{\oleft{\beta}{0}{k},\theta}{\bm\alpha}
=\sum_{a+b=r}(-1)^a\zeta(\oleft{\beta}{0}{k},\theta,\oleft{\alpha}{b}{r})
\zeta^\star(\oright{\alpha}{0}{b})
$$
into Eq.\,(\ref{eq.new03}), we have Eq.\,(\ref{eq.4.1}).
The other formula is obtained from Eq.\,(\ref{eq.new01}):
$$
\sum_{c+d=m}(-1)^c\zeta(\oright{\beta}{c}{m})
\ZU{\bm\alpha,\theta}{\oleft{\beta}{0}{c}}
=\zeta(\bm\alpha,\theta,\bm\beta),
$$
using the representation Eq.\,(\ref{eq.2.9}) of $Z_U$:
$$
\sum_{a+b=r}(-1)^a\zeta(\oright{\alpha}{0}{b})
\zeta^\star(\oleft{\beta}{0}{c},\theta,\oleft{\alpha}{b}{r}).
$$
\end{proof}

Another beautiful formulas derived from Theorem \ref{thm.41} 
are stated in the following results.

\begin{corollary}
For any nonnegative integers $p,q,s,t,\alpha$ with $s\geq 2,t\geq 2,\alpha\geq 2$, we have
\begin{align}
\sum_{a+b=q\atop c+d=p}
(-1)^{a+c}\zeta^\star(\{s\}^b)\zeta(\{s\}^a,\alpha,\{t\}^c)
\zeta^\star(\{t\}^d)&=\zeta^\star(\{t\}^p,\alpha,\{s\}^q),\\
\sum_{a+b=q\atop c+d=p}
(-1)^{a+c}\zeta(\{s\}^b)\zeta^\star(\{s\}^a,\alpha,\{t\}^c)
\zeta(\{t\}^d)&=\zeta(\{t\}^p,\alpha,\{s\}^q).
\end{align}
\end{corollary}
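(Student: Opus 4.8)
The plan is to deduce the Corollary as a direct specialization of Theorem~\ref{thm.41}. Both identities in the Corollary are instances of \eqref{eq.4.1} and \eqref{eq.4.2} obtained by the substitution $\bm\alpha=(\{s\}^q)$, $\bm\beta=(\{t\}^p)$, $\theta=\alpha$. Indeed, in that case $\oright{\alpha}{0}{b}$ is a string of $b$ copies of $s$, i.e. $(\{s\}^b)$, and likewise $\oleft{\alpha}{b}{r}=(\{s\}^{r-b})=(\{s\}^a)$ since $a+b=r=q$; similarly $\oleft{\beta}{0}{c}=(\{t\}^c)$ and $\oright{\beta}{c}{m}=(\{t\}^{m-c})=(\{t\}^d)$ with $m=p$. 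Substituting these into \eqref{eq.4.1} turns the left-hand side into $\sum_{a+b=q,\,c+d=p}(-1)^{a+c}\zeta^\star(\{s\}^b)\zeta(\{t\}^c,\alpha,\{s\}^a)\zeta^\star(\{t\}^d)$, and the right-hand side $\zeta^\star(\bm\alpha,\theta,\bm\beta)$ becomes $\zeta^\star(\{s\}^q,\alpha,\{t\}^p)$.

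The only remaining point is a cosmetic mismatch of orientation: the Corollary writes $\zeta(\{s\}^a,\alpha,\{t\}^c)$ and $\zeta^\star(\{t\}^p,\alpha,\{s\}^q)$, whereas the raw specialization produces $\zeta(\{t\}^c,\alpha,\{s\}^a)$ and $\zeta^\star(\{s\}^q,\alpha,\{t\}^p)$. Both discrepancies are resolved by the reversal (antipode-type) symmetry for constant strings: for any admissible index, reading a string of equal entries $\{s\}^a$ forwards or backwards gives the same tuple, so in fact $\zeta(\{t\}^c,\alpha,\{s\}^a)$ and $\zeta(\{s\}^a,\alpha,\{t\}^c)$ are genuinely different multiple zeta values in general---so instead one should apply the substitution with the roles of $\bm\alpha$ and $\bm\beta$ swapped, i.e. $\bm\alpha=(\{t\}^p)$, $\bm\beta=(\{s\}^q)$. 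Then \eqref{eq.4.1} reads $\sum_{a+b=p,\,c+d=q}(-1)^{a+c}\zeta^\star(\{t\}^b)\zeta(\{t\}^a,\alpha,\{s\}^c)\zeta^\star(\{s\}^d)=\zeta^\star(\{t\}^p,\alpha,\{s\}^q)$, and after renaming the summation letters $(a,b,c,d)\mapsto(c,d,a,b)$ (legitimate since both pairs range over all decompositions) one obtains exactly the first displayed identity of the Corollary. The second identity follows identically from \eqref{eq.4.2} with the same substitution.

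The main (and in truth only) obstacle is bookkeeping: one must check that the hypotheses of Theorem~\ref{thm.41} are met under the substitution, namely $\theta=\alpha\geq 2$, every entry of $\bm\alpha$ is $\geq 2$ (here $t\geq 2$), and $\beta_m\geq 2$ (here $s\geq 2$); all of these are guaranteed by the stated conditions $s\geq 2$, $t\geq 2$, $\alpha\geq 2$. One should also note the edge cases $p=0$ or $q=0$, where the convention $\zeta(\emptyset)=\zeta^\star(\emptyset)=1$ and the degenerate-range conventions $\oright{\alpha}{i}{j}=\oleft{\alpha}{i}{j}=\emptyset$ for $i\geq j$ make every term well-defined, so the identities hold in those boundary cases as well. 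No new analytic input is needed; the proof is a one-line substitution into Theorem~\ref{thm.41} followed by the relabeling of indices described above.
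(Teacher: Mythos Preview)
Your proposal is correct and matches the paper's approach: the Corollary is stated immediately after Theorem~\ref{thm.41} as a direct specialization, with no separate proof given. Your second-paragraph substitution $\bm\alpha=(\{t\}^p)$, $\bm\beta=(\{s\}^q)$, $\theta=\alpha$ followed by the relabeling $(a,b,c,d)\mapsto(c,d,a,b)$ is exactly the right move; the first paragraph's false start with the roles of $\bm\alpha,\bm\beta$ reversed is unnecessary and could simply be deleted.
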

We note that a version of finite sums of the above two identities can be found in \cite{TY2013, Xu2019}.

\section{Duality Theorems}\label{sec.3}
\begin{theorem}
Let $\zeta(\bm\beta)$ be the dual of $\zeta(\bm\alpha)$ and $n$ be 
a nonnegative integer. Then 
\begin{equation}  \label{eq.3.2}
\ZU{\bm\alpha}{\{2\}^n}=\ZL{\bm\beta}{\{2\}^n}.
\end{equation}
\end{theorem}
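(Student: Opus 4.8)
The plan is to prove the identity $\ZU{\bm\alpha}{\{2\}^n}=\ZL{\bm\beta}{\{2\}^n}$ by realizing both sides as Yamamoto integrals $I(X)$ over suitable admissible $2$-posets and then exhibiting an order-reversing, label-swapping isomorphism of the underlying diagrams that corresponds exactly to the duality operation $\zeta(\bm\alpha)^\dual=\zeta(\bm\beta)$. Recall from Section~\ref{sec.2} that $\ZU{\bm\alpha}{\{2\}^n}$ is $I$ of the anti-hook diagram consisting of the totally ordered chain for $\bm\alpha$ with a hanging chain of $n$ copies of the $2$-pattern $\bullet\,\circ$ attached below its top vertex, while $\ZL{\bm\beta}{\{2\}^n}$ is $I$ of the hook diagram: the totally ordered chain for $\bm\beta$ with the same hanging $\{2\}^n$-chain attached below its bottom vertex. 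The first step is to write these two $2$-posets out explicitly in terms of the blocks $(\{1\}^{a_i},b_i+2)$ defining $\bm\alpha$ and the dual blocks $(\{1\}^{b_j},a_j+2)$ defining $\bm\beta$.

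Next I would invoke the standard "flip" for the duality of multiple zeta values: the iterated-integral chain for $\zeta(\bm\alpha)$ becomes that for $\zeta(\bm\beta)$ under the substitution $t_x\mapsto 1-t_x$, which reverses the total order and swaps the labels $\omega_0\leftrightarrow\omega_1$ (equivalently $\bullet\leftrightarrow\circ$). The key observation is that this same change of variables, applied to the full anti-hook $2$-poset for $\ZU{\bm\alpha}{\{2\}^n}$, turns the $\bm\alpha$-chain into the $\bm\beta$-chain and simultaneously turns the $\{2\}^n$ tail --- a chain of $n$ blocks of the form $\bullet\,\circ$ hanging below the \emph{maximum} of the $\bm\alpha$-chain --- into a chain of $n$ blocks of the form $\circ\,\bullet$, reversed, hanging below what is now the \emph{minimum} of the $\bm\beta$-chain; and a reversed $\circ\,\bullet$ block read the other way is again a $\bullet\,\circ$ block, i.e.\ still a $\{2\}$-block, because $\{2\}^n$ is self-dual. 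So the flipped poset is precisely the hook poset computing $\ZL{\bm\beta}{\{2\}^n}$. Since $I$ is invariant under $t\mapsto 1-t$ on $[0,1]$ (it merely relabels the integration simplex), this yields the claimed equality. I would present this both as an explicit Hasse-diagram manipulation and, to keep it rigorous, by checking on the level of the defining series that the branch attached below the extremal vertex contributes $\sum_{\ell_1\le\cdots\le\ell_n\le k_r}\ell_1^{-2}\cdots\ell_n^{-2}$ in the $Z_U$ case and $\sum_{k_1\le\ell_1\le\cdots\le\ell_n}\ell_1^{-2}\cdots\ell_n^{-2}$ in the $Z_L$ case, matching under the flip.

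An alternative, purely algebraic route (which the paper promises as one of "three different methods") is available if one prefers to avoid diagram-chasing: use the explicit series formulas from Proposition in Section~\ref{sec.2}, namely Eq.~(\ref{eq.2.8})--(\ref{eq.2.9}) for $Z_U$ and Eq.~(\ref{eq.2.6})--(\ref{eq.2.7}) for $Z_L$, to reduce both sides to $\Q$-linear combinations of products $\zeta(\cdots)\zeta^\star(\cdots)$, then apply the ordinary duality theorem termwise together with the antipode-type relation Eq.~(\ref{eq.2.3}) to match coefficients. A third route is to use the integral-series identity together with Eq.~(\ref{eq.new01}) and Eq.~(\ref{eq.new02}): these express $\zeta(\bm\alpha,\{2\}^{\text{tail}})$-type quantities in terms of $Z_U$ and $Z_L$ respectively, and comparing them under duality of the underlying MZV pins down the relation.

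The main obstacle I anticipate is bookkeeping rather than conceptual: one must verify carefully that the $\{2\}^n$ tail really does map to a $\{2\}^n$ tail under the flip --- in particular that the alternating $\bullet\circ\bullet\circ\cdots$ pattern of the tail, once reversed and label-swapped, realigns correctly with the extremal vertex of the $\bm\beta$-chain without introducing a stray $\bullet$ or $\circ$ at the junction, and that the resulting poset is exactly the hook poset (not merely isomorphic to it after some further massaging). Getting the junction right is exactly the point where the admissibility conditions $\alpha_i\ge 2$ on all components and $\beta_m\ge 2$ (so that $\bm\beta$'s last entry, hence $\bm\alpha$'s shape, is admissible) are used; I would isolate this as a short lemma about the self-duality of the hanging $\{2\}$-chain and its compatibility with an admissible chain at either extreme.
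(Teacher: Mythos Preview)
Your main approach via the Yamamoto-integral flip $t\mapsto 1-t$ is correct and is exactly the paper's Method~2; your algebraic alternative via Eqs.~(\ref{eq.2.8}) and (\ref{eq.2.6}) is the paper's Method~3, where the termwise duality $\zeta(\bm\alpha,\{2\}^a)=\zeta(\{2\}^a,\bm\beta)$ already suffices and Eq.~(\ref{eq.2.3}) is not needed.

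Two small corrections. First, the paper's remaining method is not the inversion of Eqs.~(\ref{eq.new01})--(\ref{eq.new02}) that you sketch, but a direct generating-function argument: one sums $\ZU{\bm\alpha}{\{2\}^n}x^{2n}$, factors out $\pi x/\sin(\pi x)$, applies ordinary duality to the resulting $\zeta(\bm\alpha,\{2\}^n)$, and recognizes the result as the generating function of $\ZL{\bm\beta}{\{2\}^n}$. Second, your anticipated obstacle about the junction is overstated: there is no requirement that $\alpha_i\ge 2$ for all $i$, only that $\bm\alpha$ be admissible ($\alpha_r\ge 2$). The flip is an order-reversing, label-swapping bijection of $2$-posets, so it automatically sends the admissible anti-hook for $Z_U$ to an admissible poset; the only thing to check is that the $\{2\}^n$-tail is self-dual and that the attaching vertex (the top $\circ$ of the $\bm\alpha$-chain) becomes the bottom $\bullet$ of the $\bm\beta$-chain, which it does by construction of the dual index.
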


In this section we will prove the duality theorems using three different methods.
\begin{description}
\item[Method $1$ --- The generating functions]
We let $G_{\bm\alpha}(x)$, $g_{\bm\alpha}(x)$ be the generating function of 
$\ZU{\bm\alpha}{\{2\}^n}$, $\ZL{\bm\alpha}{\{2\}^n}$, respectively, that is,
$$
G_{\bm\alpha}(x) = \sum^\infty_{n=0}\ZU{\bm\alpha}{\{2\}^n} x^{2n},
\quad\mbox{and}\quad
g_{\bm\alpha}(x) = \sum^\infty_{n=0}\ZL{\bm\alpha}{\{2\}^n} x^{2n}.
$$
We begin from the generating function $G_{\bm\alpha}(x)$.
\begin{align*}
G_{\bm\alpha}(x) 
&= \sum_{1\leq k_1<k_2<\cdots<k_r}
\frac{\prod_{1\leq j\leq k_r}\left(1-\frac{x^2}{j^2}\right)^{-1}}
{k_1^{\alpha_1}k_2^{\alpha_2}\cdots k_r^{\alpha_r}}\\
&=\frac{\pi x}{\sin(\pi x)}
\sum_{1\leq k_1<k_2<\cdots<k_r}
\frac{\prod_{j>k_r}\left(1-\frac{x^2}{j^2}\right)}
{k_1^{\alpha_1}k_2^{\alpha_2}\cdots k_r^{\alpha_r}}\\
&=\frac{\pi x}{\sin(\pi x)}\sum^\infty_{n=0}
(-1)^n\zeta(\alpha_1,\alpha_2,\ldots,\alpha_r,\{2\}^n) x^{2n}.
\end{align*}
Since the dual of $\zeta(\bm\alpha)$ is $\zeta(\bm\beta)$, we have
$$
\zeta(\bm\alpha,\{2\}^n)^{\dual}
=\zeta(\{2\}^n,\bm\beta).
$$
Therefore, the generating function $G_{\bm\alpha}(x)$ can be rewritten as
\begin{align*}
&\frac{\pi x}{\sin(\pi x)}
\sum_{1\leq \ell_1<\ell_2<\cdots<\ell_m}
\frac{\prod_{1\leq j<\ell_1}\left(1-\frac{x^2}{j^2}\right)}
{\ell_1^{\beta_1}\ell_2^{\beta_2}\cdots \ell_m^{\beta_m}}\\
&=\sum_{1\leq \ell_1<\ell_2<\cdots<\ell_m}
\frac{\prod_{\ell_1\leq j}\left(1-\frac{x^2}{j^2}\right)^{-1}}
{\ell_1^{\beta_1}\ell_2^{\beta_2}\cdots \ell_m^{\beta_m}}\ =\ 
g_{\bm\beta}(x).
\end{align*}
The coefficients of $x^{2n}$ of both generating functions give our identity.

\item[Method $2$ --- Yamamoto's integral]

Since the duality of the Euler sums is represented by $u_i=1-t_i$ 
in its Drinfel'd iterated integral, the corresponding $2$-poset Hasse diagram
appears as a vertical reflection, with $\circ$ and $\bullet$ interchanged.

\begin{align*}
\ZU{\bm\alpha}{\{2\}^m}=
I\left(\begin{xy}
{(-1,-3) \ar@{{*}.o} |-{\bm\alpha} (7,5)},
{(-1,-2) \ar@/^2mm/ @{-} (6,5)},
{(7,5) \ar@{-}@[red] (9,2)},
{(9,2) \ar@{-} (13,6)},
{(9,2) \ar@{{*}o} |-{\{2\}^m} (23,6)},
{(9,2) \ar@{.} (19,2)},
{(13,6) \ar@{.} (23,6)},
{(19,2) \ar@{-} (23,6)},
\end{xy}\ \right)
&\overset{\mbox{\tiny dual}}{=}
I\left(\,\begin{xy}
{(-1,5) \ar@{o.{*}} |-{\bm\beta} (7,-3)},
{(-1,4) \ar@/_2mm/ @{-} (6,-3)},
{(7,-2) \ar@{-}@[red] (8,0)},
{(8,0) \ar@{o-} (12,-4)},
{(12,-4) \ar@{} |-{\{2\}^m} (18,0)},
{(12,-4) \ar@{.} (22,-4)},
{(18,0) \ar@{-{*}} (22,-4)},
{(8,0) \ar@{.} (18,0)},
\end{xy}\ \right)\\
&=
I\left(\,\begin{xy}
{(-1,-4) \ar@{-} (3,0)},
{(-1,-4) \ar@{{*}o} |-{\{2\}^m} (13,0)},
{(-1,-4) \ar@{.} (9,-4)},
{(3,0) \ar@{.} (13,0)},
{(9,-4) \ar@{-} (13,0)},
{(13,0) \ar@{-}@[red] (15,-3)},
{(15,-3) \ar@{{*}.o} |-{\bm\beta} (22,4)},
{(15,-3) \ar@/^2mm/ @{-} (22,4)},
\end{xy}\ \right)
=\ZL{\bm\beta}{\{2\}^m}.
\end{align*}

We explain it by an example with
$\ZU{3,3}{2,2}^\#=\ZL{1,2,1,2}{2,2}$.
\[
\begin{xy}
{(0,-6) \ar@{{*}-} (3,-3)},
{(3,-3) \ar@{{o}-} (6,0)},
{(6,0) \ar@{o-} (9,3)},
{(9,3) \ar@{{*}-} (12,6)},
{(12,6) \ar@{o-} (15,9)},
{(15,9) \ar@{o-} (18,6)},
{(18,6) \ar@{{*}-} (21,9)},
{(21,9) \ar@{o-} (24,6)},
{(24,6) \ar@{{*}-o} (27,9)},
\end{xy}
\quad\overset{\mbox{(dual) }}{\Rightarrow}\quad
\begin{xy}
{(0,9) \ar@{o-} (3,6)},
{(3,6) \ar@{{*}-} (6,3)},
{(6,3) \ar@{{*}-} (9,0)},
{(9,0) \ar@{o-} (12,-3)},
{(12,-3) \ar@{{*}-} (15,-6)},
{(15,-6) \ar@{{*}-} (18,-3)},
{(18,-3) \ar@{o-} (21,-6)},
{(21,-6) \ar@{{*}-} (24,-3)},
{(24,-3) \ar@{o-{*}} (27,-6)},
\end{xy}
\quad\overset{\overset{\mbox{reflected}}{\mbox{horizontally}}}{\Rightarrow}\quad
\begin{xy}
{(0,-6) \ar@{{*}-} (3,-3)},
{(3,-3) \ar@{o-} (6,-6)},
{(6,-6) \ar@{{*}-} (9,-3)},
{(9,-3) \ar@{o-} (12,-6)},
{(12,-6) \ar@{{*}-} (15,-3)},
{(15,-3) \ar@{{*}-} (18,0)},
{(18,0) \ar@{o-} (21,3)},
{(21,3) \ar@{{*}-} (24,6)},
{(24,6) \ar@{{*}-o} (27,9)},
\end{xy}
\]

\item[Method $3$ --- Expressions by MZVs and MZSVs]

We use Eq.\,(\ref{eq.2.8}) and Eq.\,(\ref{eq.2.6}), we know that 
\begin{align*}
\ZU{\bm\alpha}{\{2\}^n}
&=\sum_{a+b=n}(-1)^a\zeta(\bm\alpha,\{2\}^a)\zeta^\star(\{2\}^b),\\
\ZL{\bm\alpha}{\{2\}^n}
&=\sum_{a+b=n}(-1)^a\zeta(\{2\}^a,\bm\beta)\zeta^\star(\{2\}^b).
\end{align*}
Since $\zeta(\bm\alpha)^\dual=\zeta(\bm\beta)$, this implies that 
$\zeta(\bm\alpha,\{2\}^a)^\dual=\zeta(\{2\}^a,\bm\beta)$. Thus, we have
\begin{align*}
\ZU{\bm\alpha}{\{2\}^n}
&=\sum_{a+b}\zeta(\bm\alpha,\{2\}^a)\zeta^\star(\{2\}^b) \\
&=\sum_{a+b}\zeta(\{2\}^a,\bm\beta)\zeta^\star(\{2\}^b) \
=\ \ZL{\bm\beta}{\{2\}^n}.
\end{align*}
\end{description}
At last we give two easy applications on this duality theorem.
We use Eq.\,(\ref{eq.2.10}) we know that 
$$
\ZB{\bm\alpha}{\{2\}^m}
=\sum_{a+b+c=m}(-1)^{a+b}\zeta(\{2\}^a,\bm\alpha,\{2\}^b)
\zeta^\star(\{2\}^c).
$$
It is easy to see that $\zeta(\{2\}^a,\bm\alpha,\{2\}^b)^\dual
=\zeta(\{2\}^b,\bm\beta,\{2\}^a)$, we have
\begin{align*}
\ZB{\bm\alpha}{\{2\}^n}
&=\sum_{a+b+c=n}(-1)^{a+b}\zeta(\{2\}^a,\bm\alpha,\{2\}^b)\zeta^\star(\{2\}^c) \\
&=\sum_{a+b+c=n}(-1)^{a+b}\zeta(\{2\}^b,\bm\beta,\{2\}^a)\zeta^\star(\{2\}^c) \ \ 
=\ \  \ZB{\bm\beta}{\{2\}^n}.
\end{align*}

The function $Z_B$ is also satisfied the duality property.
Nakasuji, Ohno \cite[Theorem 4.4]{NO2021} use Schur
multiple zeta functions to give a more general duality theorem.
However, our particular duality forms are founded by the classical generating functions,
or the new Yamamoto's integral.

Since $\zeta(k)^\dual=\zeta(\{1\}^{k-2},2)$, for $k\geq 2$. 
Then for any nonnegative integer $n$, we have
\begin{align*}
\zeta(2n+k) &= \ZB{k}{\{2\}^n} \ = \ \ZB{\{1\}^{k-2},2}{\{2\}^n} \\
&=\sum_{a+b+c=n}(-1)^{a+b}
\zeta(\{2\}^a,\{1\}^{k-2},\{2\}^{b+1})\zeta^\star(\{2\}^c).
\end{align*}
Let $k=2$, we have the following weighted sum formula:
\begin{equation}
\zeta(2n+2)=\sum_{a+b=n}(-1)^b(b+1)\zeta(\{2\}^{b+1})\zeta^\star(\{2\}^a).
\end{equation}

\section{Applications: Some More Formulas}\label{sec.5}

\begin{theorem}
For any nonnegative integers $p$, $q$, and $r$ with $p>0$, $q>0$, we have
\begin{align}\nonumber
&\zeta^\star(\{2\}^p,\{1\}^r,\{2\}^q)+(-1)^{r+1}\zeta^\star(\{2\}^q,r+2,\{2\}^{p-1}) \\
&=\sum_{a+b=r-1}(-1)^a\zeta^\star(a+2,\{2\}^{p-1})
\zeta^\star(\{1\}^{b+1},\{2\}^q). \label{eq.51}
\end{align}
\end{theorem}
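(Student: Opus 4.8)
The plan is to derive Eq.\,(\ref{eq.51}) from the $Z_U$/$Z_L$ duality theorem \eqref{eq.3.2} applied to a carefully chosen dual pair. I would take $\bm\alpha=(\{1\}^{r},2)$ so that $\zeta(\bm\alpha)=\zeta(\{1\}^r,2)$; its dual is $\zeta(\bm\beta)$ with $\bm\beta=(r+2)$, a single entry (this is the pair $(a_1,b_1)=(r,0)$ in the Ohno duality recipe). Then the duality theorem gives
\[
\ZU{\{1\}^r,2}{\{2\}^n}=\ZL{r+2}{\{2\}^n}
\]
for every nonnegative integer $n$. The idea is that the right-hand side, being a hook-type $Z_L$ with a short first row, expands cleanly via \eqref{eq.2.6} or \eqref{eq.2.7} into products of zeta-star values, while the left-hand side, after expanding via \eqref{eq.2.8}, produces $\zeta^\star(\{2\}^p,\{1\}^r,\{2\}^q)$-type terms once we specialize $n$ appropriately. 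To land exactly on $p$ and $q$ as in the statement, I would instead work with $\bm\alpha=(\{1\}^{r},2,\{2\}^{p-1})$-adjacent choices, or more cleanly apply the duality at the level of the functions with a prescribed number of trailing $2$'s and then read off a single coefficient; the correspondence $\zeta^\star(\{2\}^p,\ldots)$ with $n=p+q$ fixes how the $\{2\}^p$ block and the $\{2\}^q$ block arise from the two $\zeta^\star(\{2\}^\bullet)$ factors in the $Z_L$ and $Z_U$ expansions.

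More concretely, I would proceed as follows. First, using \eqref{eq.2.7} on $\ZL{r+2}{\{2\}^n}$ (here the ``$\bm\alpha$'' slot is the single entry $r+2$, so $r-1=0$ in that formula and the sum collapses), one gets $\ZL{r+2}{\{2\}^n}=\zeta^\star(r+2,\{2\}^n)$ — wait, that is too degenerate, so instead I would enlarge the row: take $\bm\alpha=(\{1\}^r,2,\{2\}^{p-1})$, whose dual is $\bm\beta=(p+1,\{1\}^{?},\ldots)$; computing this dual carefully via the $(\{1\}^{a_i},b_i+2)$ bookkeeping is the first genuine step. Second, expand $\ZU{\bm\alpha}{\{2\}^q}$ using \eqref{eq.2.8}: this yields $\sum_{a+b=q}(-1)^a\zeta(\{1\}^r,2,\{2\}^{p-1},\{2\}^a)\zeta^\star(\{2\}^b)$, and by the known evaluation $\zeta^\star(\{2\}^p,\{1\}^r,\ldots)$ is reached after recognizing these multiple zeta values. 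Third, expand $\ZL{\bm\beta}{\{2\}^q}$ using \eqref{eq.2.6}; the hook structure of $\bm\beta$ (a head of the form $p+1$ or $r+2$ followed by $1$'s) makes the inner sum telescope into exactly the product $\zeta^\star(a+2,\{2\}^{p-1})\zeta^\star(\{1\}^{b+1},\{2\}^q)$ that appears on the right of \eqref{eq.51}. Finally, equate the two expansions and isolate the asserted identity, absorbing the ``extra'' term $(-1)^{r+1}\zeta^\star(\{2\}^q,r+2,\{2\}^{p-1})$ that comes from the boundary ($a=0$ or $b=0$) term of one of the sums.

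The step I expect to be the main obstacle is pinning down the precise dual index $\bm\beta$ and then matching the two combinatorial expansions term-by-term so that the $\{1\}^r$ and $\{2\}^p,\{2\}^q$ blocks land in exactly the right positions; the Ohno-duality bookkeeping for an index mixing a $\{1\}^r$ head, a lone interior $2$, and a trailing $\{2\}^{p-1}$ tail is delicate, and one must be careful about which entries become ``$1$'s'' and which become ``$\geq 2$'' in the dual. A secondary difficulty is convergence/admissibility: several of the intermediate $Z_U$ and $Z_L$ instances have a leading entry equal to $1$ or $2$, so one must check that every $\zeta$ and $\zeta^\star$ that appears is admissible (which it is, since all the relevant trailing entries are $\geq 2$), and that the use of \eqref{eq.2.6}–\eqref{eq.2.9} is legitimate for these particular argument patterns. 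Once the index matching is done correctly, the rest is a bookkeeping of signs: the $(-1)^a$ in the $Z_U$ expansion, the $(-1)^a$ in the $Z_L$ expansion, and the reindexing $a+b=r-1$ versus the boundary term combine to give precisely the signs $(-1)^{r+1}$ and $(-1)^a$ displayed in \eqref{eq.51}.
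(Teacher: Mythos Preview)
Your plan has a genuine gap: the specific duality instance you propose does not produce the identity, and your guess about the dual index is incorrect. The dual of $\bm\alpha=(\{1\}^r,\{2\}^p)$ is $\bm\beta=(\{2\}^{p-1},r+2)$, which contains no $1$'s at all; so the ``hook structure \ldots followed by $1$'s'' you anticipate for $\bm\beta$ never materializes. With the correct $\bm\beta$, expanding $\ZU{\{1\}^r,\{2\}^p}{\{2\}^q}$ by \eqref{eq.2.8} gives $\sum_{a+b=q}(-1)^a\zeta(\{1\}^r,\{2\}^{p+a})\zeta^\star(\{2\}^b)$, while expanding $\ZL{\{2\}^{p-1},r+2}{\{2\}^q}$ by \eqref{eq.2.6} gives $\sum_{a+b=q}(-1)^a\zeta(\{2\}^{p+a-1},r+2)\zeta^\star(\{2\}^b)$. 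These are equal \emph{term by term} because $\zeta(\{1\}^r,\{2\}^{p+a})=\zeta(\{2\}^{p+a-1},r+2)$ is nothing but ordinary MZV duality; the $Z_U=Z_L$ relation collapses to a tautology and yields no new information. The alternative expansion \eqref{eq.2.9} is unavailable since $\bm\alpha$ has leading $1$'s, and \eqref{eq.2.7} on the $Z_L$ side produces factors $\zeta^\star(\{2\}^{a+1+q})$ and $\zeta^\star(r+2,\{2\}^{p+q-1})$, not the products $\zeta^\star(a+2,\{2\}^{p-1})\zeta^\star(\{1\}^{b+1},\{2\}^q)$ you need. Finally, there is no ``known evaluation'' turning $\zeta(\{1\}^r,\{2\}^{p+a})$ into $\zeta^\star(\{2\}^p,\{1\}^r,\{2\}^q)$.

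The paper's proof follows a different route. It starts from the generating function $G(x)=\sum_{p\ge0}\zeta^\star(\{2\}^p,\{1\}^r,\{2\}^q)x^{2p}$, uses the product $\prod_{j\ge1}(1-x^2/j^2)^{-1}=\pi x/\sin(\pi x)$ to rewrite it, and extracts the coefficient of $x^{2p}$ to obtain $\zeta^\star(\{2\}^p,\{1\}^r,\{2\}^q)$ as $\zeta^\star(\{2\}^p)\zeta^\star(\{1\}^r,\{2\}^q)$ plus a sum involving $\ZL{1,\{2\}^{m+1}}{\{1\}^{r-1},\{2\}^q}$. The recursion \eqref{eq.zlrec} is then applied $r-1$ times to peel the $1$'s out of the lower row, leaving a boundary term $\ZL{\{1\}^r,\{2\}^{m+1}}{\{2\}^q}$; it is \emph{this} object to which the duality theorem is applied, giving $\ZU{\{2\}^m,r+2}{\{2\}^q}$. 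The resulting sums are then closed using \eqref{eq.2.3} and \eqref{eq.new03}. The key point you are missing is that the target MZSV $\zeta^\star(\{2\}^p,\{1\}^r,\{2\}^q)$ must be introduced by some mechanism other than the $Z_U$/$Z_L$ expansions \eqref{eq.2.6}--\eqref{eq.2.9}; in the paper this role is played by the generating-function step.
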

\begin{proof}
Let 
$$
G(x)=\sum^\infty_{p=0}\zeta^\star(\{2\}^p,\{1\}^r,\{2\}^q)x^{2p}
$$
be the generating function of $\zeta^\star(\{2\}^p,\{1\}^r,\{2\}^q)x^{2p}$. Thus,
\begin{align*}
G(x) &= \sum_{1\leq k_1\leq k_2\leq \cdots\leq k_{q+r}}
\frac{\prod_{1\leq j\leq k_1}\left(1-\frac{x^2}{j^2}\right)^{-1}}
{k_1k_2\cdots k_rk_{r+1}^2k_{r+2}^2\cdots k_{q+r}^2} \\
&=\frac{\pi x}{\sin(\pi x)}
\sum_{1\leq k_1\leq k_2\leq \cdots\leq k_{q+r}}
\frac{\prod_{j>k_1}\left(1-\frac{x^2}{j^2}\right)}
{k_1k_2\cdots k_rk_{r+1}^2k_{r+2}^2\cdots k_{q+r}^2}.
\end{align*}
We represent the above summation as
$$
\zeta^\star(\{1\}^r,\{2\}^q)
+\sum^\infty_{n=1}(-1)^n\ZL{1,\{2\}^n}{\{1\}^{r-1},\{2\}^q}x^{2n}.
$$
By convolution, we have
\begin{align*}
G(x) &= \sum^\infty_{n=0}\zeta^\star(\{2\}^n)\zeta^\star(\{1\}^r,\{2\}^q)x^{2n} \\
&\quad+\sum^\infty_{p=1}
\left(\sum_{m+n=p\atop m\geq 1}(-1)^m\zeta^\star(\{2\}^n)
\ZL{1,\{2\}^m}{\{1\}^{r-1},\{2\}^q}\right)x^{2p}.
\end{align*}
Comparing the coefficient of $x^{2p}$, for $p\geq 1$, we have
\begin{align}\label{eq.xx}
\zeta^\star(\{2\}^p,\{1\}^r,\{2\}^q) &=
\zeta^\star(\{2\}^p)\zeta^\star(\{1\}^r,\{2\}^q) \\
&\quad+\sum_{m+n=p-1}(-1)^{m+1}\zeta^\star(\{2\}^n)
\ZL{1,\{2\}^{m+1}}{\{1\}^{r-1},\{2\}^q}. \nonumber
\end{align}
We use Eq.\,(\ref{eq.zlrec}) to the function $Z_L$, this gives 
\begin{align*}
\ZL{1,\{2\}^{m+1}}{\{1\}^{r-1},\{2\}^q}
&=(-1)^{r-1}\ZL{\{1\}^r,\{2\}^{m+1}}{\{2\}^q}\\
&\quad+\sum_{a+b=r-2}(-1)^a\zeta(\{1\}^{a+1},\{2\}^{m+1})
\zeta^\star(\{1\}^{b+1},\{2\}^q).
\end{align*}
It is clear that $\zeta(\{1\}^{a+1},\{2\}^{m+1})^\dual 
= \zeta(\{2\}^m,a+3)$, and we apply the dual theorem to transform
$$
\ZL{\{1\}^r,\{2\}^{m+1}}{\{2\}^q} =
\ZU{\{2\}^m,r+2}{\{2\}^q}
$$
then we obtain
\begin{align*}
\ZL{1,\{2\}^{m+1}}{\{1\}^{r-1},\{2\}^q}
&=(-1)^{r-1}\ZU{\{2\}^m,r+2}{\{2\}^q}\\
&\quad+\sum_{a+b=r-2}(-1)^a\zeta(\{2\}^m,a+3)
\zeta^\star(\{1\}^{b+1},\{2\}^q).
\end{align*}
The summation in Eq.\,(\ref{eq.xx}) becomes
\begin{align*}
&\sum_{m+n=p-1}(-1)^{m+r}\zeta^\star(\{2\}^n)
\ZU{\{2\}^m,r+2}{\{2\}^q} \\
&\quad+\sum_{m+n=p-1}\sum_{a+b=r-2}
(-1)^{m+a+1}\zeta^\star(\{2\}^n)\zeta(\{2\}^m,a+3)\zeta^\star(\{1\}^{b+1},\{2\}^q).
\end{align*}
The second summation of the above formula can be simplified by using Eq.\,(\ref{eq.2.3})
$$
\sum_{m+n=p-1}(-1)^m\zeta^\star(\{2\}^n)\zeta(\{2\}^m,a+3)
=\zeta^\star(a+3,\{2\}^{p-1}).
$$
We will get 
$$
\sum_{a+b=r-2}(-1)^{a+1}\zeta^\star(a+3,\{2\}^{p-1})\zeta^\star(\{1\}^{b+1},\{2\}^q).
$$
We remain to treat the summation
$$
\sum_{m+n=p-1}(-1)^{m+r}\zeta^\star(\{2\}^n)
\ZU{\{2\}^m,r+2}{\{2\}^q}.
$$
We apply Eq.\,(\ref{eq.new03}) with $\bm\alpha=(\{2\}^q)$, $\theta=r+2$, and $\bm\beta=(\{2\}^{p-1})$,
then the above summation is equal to 
$$
(-1)^r\zeta^\star(\{2\}^q,r+2,\{2\}^{p-1}).
$$
Therefore Eq.\,(\ref{eq.xx}) becomes
\begin{align*}
\zeta^\star(\{2\}^p,\{1\}^r,\{2\}^q) &=
\zeta^\star(\{2\}^p)\zeta^\star(\{1\}^r,\{2\}^q)+(-1)^r\zeta(\{2\}^q,r+2,\{2\}^{p-1})\\
&\quad+\sum_{a+b=r-2}(-1)^{a+1}\zeta^\star(a+3,\{2\}^{p-1})\zeta^\star(\{1\}^{b+1},\{2\}^q) \\
&=(-1)^r\zeta^\star(\{2\}^q,r+2,\{2\}^{p-1})\\
&\quad+\sum_{a+b=r-1}(-1)^a\zeta^\star(a+2,\{2\}^{p-1})\zeta^\star(\{1\}^{b+1},\{2\}^q).
\end{align*}
This finishes our work.
\end{proof}

It is well-known that (ref. \cite{Zlobin2005}) $\zeta^\star(1,\{2\}^q)=2\zeta(2q+1)$, 
and we leverage Zagier's formula (ref. \cite{Zagier2012}) to compute $\zeta^\star(\{2\}^q,3,\{2\}^{p-1})$:
$$
\zeta^\star(\{2\}^q,3,\{2\}^{p-1})=-2\sum^{p+q}_{k=1}
\left[\binom{2k}{2q}-\delta_{k,q}-\left(1-\frac{1}{2^{2k}}\right)\binom{2k}{2p-1}\right]
\zeta^\star(\{2\}^{p+q-k})\zeta(2k+1).
$$
By substituting $r=1$ in Eq.\,(\ref{eq.51}), we obtain an evaluation of $\zeta^\star(\{2\}^p,1,\{2\}^q)$, 
for any positive integers $p$ and $q$:
$$
\zeta^\star(\{2\}^p,1,\{2\}^q) =
2\sum^{p+q}_{k=1}
\left[\binom{2k}{2q}-\left(1-\frac{1}{2^{2k}}\right)\binom{2k}{2p-1}\right]
\zeta^\star(\{2\}^{p+q-k})\zeta(2k+1).
$$

On the other hand, if we apply $p=q=1$ in Eq.\,(\ref{eq.51}), then 
$$
\sum_{a+b=r}(-1)^a(b+1)\zeta(a+2)\zeta(b+2)
=\zeta^\star(2,\{1\}^r,2)+(-1)^{r}\zeta(r+2,2),
$$
for any nonnegative integer $r$, by using the fact (ref. \cite{ChenE2022})
$\zeta^\star(\{1\}^{b+1},2)=(b+2)\zeta(b+3)$.

\titleformat{\section}
{\sffamily\color{sectitlecolor}\Large\bfseries\filcenter}{}{2em}{#1}%


\begin{thebibliography}{99}
\bibitem{Chen2017}
K.-W.~Chen,
\emph{Generalized harmonic numbers and Euler sums},
Int. J. Number Theory {\bf 13}, no. 2, 513--528 (2017).
\href{https://doi.org/10.1142/S1793042116500883}%
{https://doi.org/10.1142/S1793042116500883}.

\bibitem{CCE2016}
K.-W.~Chen, C.-L.~Chung, M.~Eie,
\emph{Sum formulas and duality theorems of multiple zeta values},
J. Number Theory \textbf{158}, 33--53 (2016).
\href{https://doi.org/10.1016/j.jnt.2015.06.014}%
{https://doi.org/10.1016/j.jnt.2015.06.014}.

\bibitem{ChenE2022}
K.-W.~Chen, M.~Eie,
\emph{On three general forms of multiple zeta(-star) values},
Expo. Math. (2023).
\href{https://https://doi.org/10.1016/j.exmath.2023.02.003}%
{https://www.sciencedirect.com/science/article/pii/S072308692300004X}.
\bibitem{E09}
M.~Eie, 
\emph{Topics in Number Theory}, 
Monographs in Number Theory \textbf{2}, World Scientific, Singapore (2009).
\href{https://doi.org/10.1142/7036}{https://doi.org/10.1142/7036}.
\bibitem{E13}
M.~Eie,
\emph{The Theory of Multiple Zeta Values with Applications in Combinatorics},
Monographs in Number Theory \textbf{7}, World Scientific, Singapore (2013). 
\href{https://doi.org/10.1142/8769}{https://doi.org/10.1142/8769}.
\bibitem{HMO2021}
M.~Hirose, H.~Murahara, M.~Ono,
\emph{On variants of symmetric multiple zeta-star values and the cyclic sum formula},
\textit{Ramanujan J.}
{56}, 467--489 (2021).
\href{https://doi.org/10.1007/s11139-020-00341-3}{https://doi.org/10.1007/s11139-020-00341-3}.
\bibitem{Hoffman1992}
M.~E.~Hoffman,
Multiple harmonic series,
\textit{Pac. J. Math.}
{152}, 275--290 (1992).
\href{https://doi.org/10.2140/pjm.1992.152.275}%
{https://doi.org/10.2140/pjm.1992.152.275}.
\bibitem{KY2018}
M.~Kaneko, S.~Yamamoto,
A new integral-series identity of multiple zeta values and regularizations,
\textit{Sel. Math. New Ser.}
{24}, 2499--2521 (2018).
\href{https://doi.org/10.1007/s00029-018-0400-8}{https://doi.org/10.1007/s00029-018-0400-8}.
\bibitem{NO2021}
M.~Nakasuji, Y.~Ohno,
Duality formula and its generalization for Schur multiple 
zeta functions,
arXiv (2021),
 \href{https://arxiv.org/abs/2109.14362v1}%
{arXiv: 2109.14362v1 [math.NT]}.
\bibitem{NPY2018}
M.~Nakasuji, O.~Phuksuwan, Y.~Yamasaki,
On Schur multiple zeta functions: a combinatoric generalization
of multiple zeta functions,
\textit{Adv. Math.}
{333}, 570--619 (2018).
\href{https://doi.org/10.1016/j.aim.2018.05.014}{https://doi.org/10.1016/j.aim.2018.05.014}.
\bibitem{NT2022}
M.~Nakasuji, W.~Takeda,
Shuffle product formula of the Schur multiple zeta values of hook type,
arXiv (2022),
\href{https://arxiv.org/abs/2201.01402v2}%
{arXiv: 2202.01402v2 [math.NT]}.
\bibitem{Ohno1999}
Y.~Ohno,
A generalization of the duality and sum formulas on the multiple zeta values,
\textit{J. Number Theory}
{74}, 39--43 (1999).
\bibitem{Ohno2005}
Y.~Ohno,
\emph{Sum relations for multiple zeta values},
In: Aoki, T., Kanemitsu S., Nakahara M., Ohno Y. (eds)
Zeta Functions, Topology and Quantum Physics, Developments in Mathematics,
{14}, Springer, Boston, MA, 131--144 (2005).
\href{https://doi.org/10.1007/0-387-24981-8_8}{https://doi.org/10.1007/0-387-24981-8\_8}.
\bibitem{TY2013}
K.~Tasaka, S.~Yamamoto, 
On some multiple zeta-star values of one-two-three indices, 
\textit{Inter. J. Number Theory}
{9 (5)}, 1171--1184 (2013).
\href{https://doi.org/10.1142/S1793042113500188}{https://doi.org/10.1142/S1793042113500188}.
\bibitem{Xu2019}
C.~Xu,
Evaluations of nonlinear Euler sums of weight ten,
\textit{Appl. Math. Comput.}
{346}, 594--611 (2019).
\href{https://doi.org/10.1016/j.amc.2018.10.042}{https://doi.org/10.1016/j.amc.2018.10.042}.
\bibitem{Yama2017}
S.~Yamamoto,
Multiple zeta-star values and multiple integrals,
\textsl{RIMS K\^oky\^roku Bessatsu}
{B68}, 3--14 (2017).
\bibitem{Yama2020}
S.~Yamamoto,
Integrals associated with $2$-posets and applications to multiple zeta values,
\textsl{RIMS K\^oky\^roku Bessatsu}
{B83}, 27--46 (2020).
\bibitem{Zagier1994}
D.~Zagier,
Values of zeta functions and their applications.
In: First European Congress of Mathematics, 
vol. II, pp. 497--512. Birkhauser, Boston (1994).
\href{https://doi.org/10.1007/978-3-0348-9112-7\_23}%
{https://doi.org/10.1007/978-3-0348-9112-7\_23}.
\bibitem{Zagier2012}
D.~Zagier,
Evaluation of the multiple zeta values $\zeta(2,\ldots,2,3,2,\ldots,2)$,
\textsl{Ann. of Math.}
{175 (2)}, 977--1000 (2012).
\href{https://doi.org/10.4007/annals.2012.175.2.11}{https://doi.org/10.4007/annals.2012.175.2.11}.
\bibitem{Zlobin2005}
S.~A.~Zlobin,
Generating functions for the values of a multiple zeta function,
\textsl{Vestnik Moskov. Univ. Ser. 1. Mat. Mekh.}
{(2)}, 55--59 (2005).
\end{thebibliography}
\end{document}